\newcommand{\LSC}[1]{LSC(#1)}
\newcommand{\USC}[1]{USC(#1)}
\newcommand{\T}{\intercal}
\newcommand{\D}{\mathrm{D}}
\let\O\relax
\newcommand{\O}{\mathcal{O}(n)}
\newcommand{\<}{\left\langle}
\newcommand{\>}{\right\rangle}
\newcommand{\R}{\mathbb{R}}
\newcommand{\Sym}{\mathcal{S}}
\newcommand{\Symp}{\Sym^+}
\newcommand{\Sympp}{\Sym^	{++}}
\newcommand{\abs}[1]{\left\vert#1\right\vert}
\newcommand{\eps}{\varepsilon}
\newcommand{\kskip}{\vskip .0cm}
\DeclareMathOperator{\Id}{Id}
\DeclareMathOperator{\tr}{tr}
\newtheorem{definition}{Definition}[section]
\newtheorem{proposition}[definition]{Proposition}
\newtheorem{lemma}[definition]{Lemma}
\newtheorem{theorem}[definition]{Theorem}
\newtheorem*{acknowledgement}{Acknowledgement}
\newcommand{\cmin}{c_{\min}}
\newcommand{\cmax}{c_{\max}}
\newcommand{\cF}{c_F}
\newcommand{\cQ}{c_Q}
\newcommand{\Thetac}{\frac{\alpha+1}{\cF + \alpha (\cF - \cQ)}}
\newcommand{\lmin}{\lambda_{\min}}
\newcommand{\lmax}{\lambda_{\max}}
\author{Michael K\"uhn}
\title{Power- and Log-concavity of viscosity solutions to some elliptic Dirichlet problems}
\begin{document}
	\clearpage
	\maketitle
	\thispagestyle{empty}
	\setcounter{page}{0}
	
	\section{Introduction}
It has long been known that solutions $u$ to the torsion problem
\begin{align}	\label{eq:laplace_equation}
	- \Delta u	&= 1 \qquad\text{in } \Omega,	\\
			u	&= 0 \qquad\text{on } \partial \Omega,\notag
\end{align}
and positive first eigenfunctions of the Dirichlet-Laplacian
\begin{align}	\label{eq:eigen_laplace_equation}
	- \Delta u	&= \lambda u \phantom{0} \quad\text{in } \Omega,	\\
			u	&= 0 \phantom{\lambda u} \quad\text{on } \partial \Omega, \notag
\end{align}
in convex bounded domains $\Omega \subset \R^n$ have certain concavity properties. For $n = 1$ they are both concave, and for $n \geq 2$ the solution to \eqref{eq:laplace_equation} is $\frac{1}{2}$-power concave, i.e. $u^\frac{1}{2}$ is concave, while the solution to \eqref{eq:eigen_laplace_equation} is $\log$-concave.
We are interested in similar results for viscosity solutions to
\begin{align}	\label{eq:normalized_plaplace}
	-\abs{\nabla u}^\alpha \Delta_p^N u	&= 1	\qquad\text{in } \Omega, \\
	u &= 0 \qquad\text{on } \partial\Omega, \notag
\end{align}
and positive solutions of the eigenvalue problem
\begin{align}	\label{eq:eigen_normalized_plaplace}
	-\abs{\nabla u}^\alpha \Delta_p^N u	&= \lambda \abs{u}^\alpha u	\phantom{0}\quad\text{in } \Omega, \\
	u &= 0	\phantom{\lambda \abs{u}^\alpha u}\quad\text{on } \partial \Omega. \notag
\end{align}
Here $\alpha \in \R_{\geq 0}$, $p \in [2, \infty]$, $\Omega \subset \R^n$ is  a convex bounded domain that satisfies the interior sphere condition, and $\Delta_p^N$ denotes the normalized $p$-Laplacian operator as in \cite{kawohl2016geometry}, defined by
%
\[
	-\Delta_p^N u := -\frac{p-2}{p} \abs{\nabla u}^{-2} \< \nabla u, \D^2 u \nabla u \> - \frac{1}{p} \Delta u.
\]
More precisely we show that any positive viscosity solution $u$ to \eqref{eq:normalized_plaplace} with vanishing boundary values has the property that $u^\frac{\alpha+1}{\alpha+2}$ is a concave function.
Taking $\alpha = p-2$ this reproduces the result of \cite{sakaguchi1987concavity}, showing that weak solutions to
\[
	\begin{aligned}
		-\Delta_p u &=	1	&&\text{in } \Omega,	\\
				u	&=	0	&&\text{on } \partial \Omega
	\end{aligned}
\]
in the Sobolev sense are power concave with exponent $\frac{p-1}{p}$, in case of $p \geq 2$.
We are also able to reproduce power concavity for the normalized infinity Laplace operator shown in \cite{crasta2015dirichlet} with $\alpha = 2$, and \cite{crasta2016C1} with $\alpha = 0$. Thus we also extend the main result of \cite{kuehn2017thesis} where $\alpha = 0$ and $p \in [2, \infty]$ was discussed.
\\
The methods provided in \cite{alvarez1997convex} lay the foundation for this article.
The problem of power concavity itself was motivated by the pioneering work of \cite{makar1971solution}, \cite{korevaar1983capillary}, \cite{kawohl1984superharmonic}, \cite{kennington1985power}, and \cite{kawohl1985solutions}.
\\
Reults similar to ours have been shown in \cite{bianchini2013power} for very general operators and in \cite{kulczycki2017onconcavity} for the fractional Laplacian $\Delta^{\frac{1}{2}}$ in the planar case. Power concavity for parabolic equations was discussed in \cite{ishige2014note} and \cite{zhao2017power}.

To prove the desired power concavity, we use the comparison principle \cite[Theorem 1.3, Theorem 2.4]{lu2008uniqueness} and show that the convex envelope of the function $-u^\frac{\alpha+1}{\alpha+2}$ coincides with the function itself. For this it seems to be crucial that the convex envelope is not spanned by any boundary points. To assure this we first prove a Hopf-type Lemma.
\\

Secondly we
 show that any positive viscosity solution $u$ to \eqref{eq:eigen_normalized_plaplace} with vanishing boundary values has the property that $\log u$ is a concave function, given any positive eigenvalue $\lambda > 0$. For $p = 2$ and $\alpha = 0$ this goes back to \cite{brascamp1976extensions}, \cite{korevaar1983convex}, \cite{caffarelli1982convexity}, \cite{kawohl1985rearrangements}.
Again this reproduces the result of \cite{sakaguchi1987concavity} by choosing $\alpha = p-2$, $p \geq 2$. For this we only need to adapt the methods used in the first part. Though not necessary for obtaining the desired result we also show a Hopf-type Lemma for the Eigenvalue problem.

\subsection{Assumptions}
Some of our auxiliary results hold for more general equations. Therefore we consider first the equations
\begin{align}	\label{eq:main_equation}
	F(\nabla u, \D^2 u) &= f	\qquad\text{in } \Omega
	\intertext{and}
	\label{eq:eigen_main_equation}
	F(\nabla u, \D^2 u) &= f \abs{u}^\alpha u	\qquad\text{in } \Omega
\end{align}
with a continuous function $f : \Omega \to \R$ satisfying
\[
	\inf_\Omega f > 0,
\]
a convex domain $\Omega$ that satisfies the interior sphere condition, and a continuous operator $F: \R^n \setminus \{0\} \times \Sym \to \R$.
Here $\Sym \subset \R^{n \times n}$ denotes the set of real symmetric matrices.
We exclude all points $(0,X) \in \R^n \times \Sym$ because we also want to consider operators which are not well-defined but bounded at $(0,X)$.
In this case we use the lower and upper semicontinuous envelopes of $F$, defined by
\[
	F_*(q,X) := \lim_{r \downarrow 0} \inf \{F(z,X) ~\vert~ \abs{q-z} \leq r\}
\]
and
\[
	F^*(q,X) := \lim_{r \downarrow 0} \sup \{F(z,X) ~\vert~ \abs{q-z} \leq r\}
\]
respectively. At points $(q,X) \neq (0,X)$ or if $F$ is well-defined at $(q,X) = (0,X)$ we simply have $F(q,X) = F_*(q,X) = F^*(q,X)$.
\\

Furthermore we impose the following assumptions on $F$. For some constants $0 < \cmin \leq \cmax$ and all $a,b, \lambda \in \R$, $q \in \R^n \setminus \{0\}$, symmetric matrices $X,Y \in \Sym$, positive definite symmetric matrices $A_1, A_2 \in \Sympp$, and orthogonal $Q \in \O$ we assume $F$ to satisfy
\begin{equation} \tag{A1} \label{ass:linearity}
	F(q, a X + b Y) = a F(q, X) + b F(q, Y)
\end{equation}
\begin{equation} \tag{A2} \label{ass:boundedness}
	\cmin \abs{q}^\alpha \lmin(X)
	\leq - F(q, X)
	\leq \cmax \abs{q}^\alpha \lmax(X)
\end{equation}
\begin{equation} \tag{A3} \label{ass:scaling}
	F(\lambda q, X) = \abs{\lambda}^\alpha F(q,X)
\end{equation}
\begin{equation} \tag{A4} \label{ass:rotinvariance}
	F(q,X)	=	F(Q^\T q, Q^\T X Q)
\end{equation}
\begin{equation} \tag{A5}	\label{ass:operator_concavity}
	F^*(q,(\mu A_1 + (1-\mu)A_2)^{-1})
	\geq \mu F^*(q,A_1^{-1}) + (1-\mu) F^*(q,A_2^{-1}).
\end{equation}
The assumptions \eqref{ass:linearity} - \eqref{ass:rotinvariance} are easily verified when considering the operator appearing in \eqref{eq:normalized_plaplace}. In order to verify \eqref{ass:operator_concavity} one should make use of the fact that the mapping $(q,A) \mapsto \< q, A^{-1} q \>$ is convex, i.e.
\begin{align*}
	&\< t q_1 + (1-t) q_2, (t A_1 + (1-t) A_2)^{-1} (t q_1 + (1-t) q_2) \>	\\
	&\qquad\leq
	t \< q_1, A_1^{-1} q_1 \> + (1-t) \< q_2, A_2^{-1} q_2 \>
\end{align*}
for all $q_1,q_2 \in \R^n$, $A_1, A_2 \in \Sympp$ and $t \in [0,1]$. The proof can be found in \cite[p. 286]{alvarez1997convex}.
\\
We also want to mention that \eqref{ass:linearity} and \eqref{ass:boundedness} already imply \emph{degenerate ellipticity}, that is
\[
	F(q, X) \leq F(q, Y)
\]
whenever $X \geq Y$ for symmetric matrices $X,Y \in \Sym$.

\subsection{Viscosity solutions}
A suitable notion of weak solutions to problems like \eqref{eq:normalized_plaplace} was introduced in \cite{crandall1992user}. The so called \emph{viscosity solutions} can be used to investigate elliptic problems of second order with non-divergence structure. Since we will always assume vanishing boundary values in the classical sense, we only introduce the notion for equations without boundary conditions. For the definition we consider the more general equation
\begin{align}	\label{eq:general_main_equation}
	F(\nabla u(x), \D^2 u(x)) = g(x, u(x), \nabla u(x))
	\qquad\text{in } \Omega
\end{align}
with a function $g \in C(\Omega \times \R \times \R^n)$ that satisfies
\[
	-g(x,z_1, q) \leq -g(x,z_2, q)
\]
for all $x \in \Omega$, $z_1,z_2 \in \R, q \in \R^n$, whenever $z_1 \leq z_2$. One also says that $-g$ is \emph{proper}.
\begin{definition}[Viscosity solutions]
	A semicontinuous function $u \in \USC{\Omega}$ is a \emph{viscosity subsolution} to \eqref{eq:general_main_equation} if for every $\phi \in C^2(\Omega)$
	\[
		F_*(\nabla \phi(x_0), \D^2 \phi(x_0) \leq g(x_0,u(x_0),\nabla \phi(x_0))
	\]
	holds whenever $u-\phi$ attains a local maximum at a point $x_0 \in \Omega$ with $u(x_0) = \phi(x_0)$.	\\
	A semicontinuous function $u \in \LSC{\Omega}$ is a \emph{viscosity supersolution} to \eqref{eq:general_main_equation} if for every $\phi \in C^2(\Omega)$
	\[
		F^*(\nabla \phi(x_0), \D^2 \phi(x_0) \geq g(x_0,u(x_0), \nabla \phi(x_0))
	\]
	holds whenever $u-\phi$ attains a local minimum at a point $x_0 \in \Omega$ with $u(x_0) = \phi(x_0)$.	\\
	Finally a continuous function $u \in C(\Omega)$ 	is a \emph{viscosity solution} to \eqref{eq:general_main_equation} if it is both, a viscosity sub- and supersolution.
\end{definition}

An equivalent definition of viscosity solutions can be stated by using so called \emph{semijets} of second order. They resemble the set of derivatives appearing in  the Taylor approximation for smooth functions.
\begin{definition}[Semijets]
	For $u \in \USC{\overline{\Omega}}$ we define the \emph{second order superjet} of $u$ at $x \in \Omega$ by
	\begin{align*}
		J_\Omega^{2,+}u(x) :=
		\big\{
		(q,X) \in \R^n \times \Sym ~\big\vert~ u(y) \leq u(x) + \< q, y-x \> + \tfrac{1}{2} \< A(y-x),y-x\> 
		\\+\,o\big(\abs{y-x}^2\big) \text{ as } y \to x
		\big\}.
	\end{align*}
	For $u \in \LSC{\overline{\Omega}}$ we define the \emph{second order subjet} of $u$ at $x \in \Omega$ by
	\begin{align*}
	J_\Omega^{2,-}u(x) :=
	\big\{
	(q,X) \in \R^n \times \Sym ~\big\vert~ u(y) \geq u(x) + \< q, y-x \> + \tfrac{1}{2} \< A(y-x),y-x\>
	\\+\,o\big(\abs{y-x}^2\big) \text{ as } y \to x
	\big\}.
	\end{align*}
\end{definition}
\kskip

In order to consider viscosity solutions to boundary value problems, the authors of \cite{crandall1983viscosity} introduced the so called \emph{closure of semijets}. Though we do not rephrase the definition of viscosity solutions, they are necessary to use the results of \cite{alvarez1997convex}.

\kskip
\begin{definition}[Closure of semijets]
	For $u \in \USC{\overline{\Omega}}$ we define the \emph{second order superjet} of $u$ at $x \in \overline{\Omega}$ by
	\begin{align*}
		\overline{J}_{\overline{\Omega}}^{2,+}u(x) :=
		\big\{
		(q,X) \in \R^n \times \Sym ~\big\vert~ &\exists (x_n,u(x_n),q_n,X_n) \to (x,u(x),q,X)
		\\
		&\text{ as } n \to \infty \text{ with } (q_n,X_n) \in J_\Omega^{2,+}u(x_n)
		\big\}.
	\end{align*}
	For $u \in \LSC{\overline{\Omega}}$ we define the \emph{second order subjet} of $u$ at $x \in \overline{\Omega}$ by
	\begin{align*}
		\overline{J}_{\overline{\Omega}}^{2,-}u(x_n) :=
		\big\{
		(q,X) \in \R^n \times \Sym ~\big\vert~ &\exists (x_n,u(x_n),q_n,X_n) \to (x,u(x),q,X)
		\\
		&\text{ as } n \to \infty \text{ with } (q_n,X_n) \in J_\Omega^{2,-}u(x_n)
		\big\}.
	\end{align*}
\end{definition}
	\section{General results} \label{sec:general}
In this section we use radial solutions to \eqref{eq:main_equation} to derive a comparison principle with radial functions. This in turn is used to prove a Hopf-type Lemma for supersolutions to \eqref{eq:main_equation}. Therefore we first prove two simple but useful identities.
\begin{lemma}	\label{lem:c_F}
	There is a constant $\cF \in \R$ with
	\[
		F(q, \Id) = -\cF \abs{q}^\alpha.
	\]
	for any $q \in \R^n \setminus \{0\}$, satisfying $\cmin \leq \cF \leq \cmax$.
\end{lemma}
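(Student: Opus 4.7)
The plan is to exploit the three structural assumptions \eqref{ass:boundedness}, \eqref{ass:scaling}, and \eqref{ass:rotinvariance} in turn: rotation invariance reduces $F(\cdot,\Id)$ to a function of $|q|$ alone, the scaling identity then pins down the $|q|^\alpha$ dependence, and the boundedness estimate supplies the numerical bounds on the resulting constant.

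More concretely, I would first fix any unit vector $q_0 \in \R^n$ and set $c_F := -F(q_0, \Id)$. For an arbitrary unit vector $q$, pick an orthogonal $Q \in \O$ with $Q^\T q = q_0$; since $Q^\T \Id Q = \Id$, assumption \eqref{ass:rotinvariance} yields
\[
    F(q, \Id) = F(Q^\T q, Q^\T \Id Q) = F(q_0, \Id) = -c_F,
\]
so $F(\cdot, \Id)$ is constant on the unit sphere and the value $c_F$ does not depend on the initial choice of $q_0$. Next, for arbitrary $q \in \R^n \setminus \{0\}$, write $q = |q|\,(q/|q|)$ and apply \eqref{ass:scaling} with $\lambda = |q|$ to obtain
\[
    F(q, \Id) = |q|^\alpha F(q/|q|, \Id) = -c_F\,|q|^\alpha,
\]
which is the asserted identity.

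Finally, the bounds on $c_F$ follow by specializing \eqref{ass:boundedness} to $X = \Id$ and to a unit vector $q_0$: since $\lmin(\Id) = \lmax(\Id) = 1$ and $|q_0|^\alpha = 1$, we read off
\[
    \cmin \leq -F(q_0, \Id) = c_F \leq \cmax.
\]
There is no real obstacle here; the only thing to be a bit careful about is the invocation of \eqref{ass:rotinvariance} to justify radial symmetry, and the observation that the value $-F(q_0,\Id)$ is independent of the chosen unit vector $q_0$, which is exactly what the rotation invariance argument above delivers.
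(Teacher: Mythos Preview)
Your proof is correct and follows essentially the same approach as the paper: use \eqref{ass:rotinvariance} together with $Q^\T \Id Q = \Id$ to see that $F(\cdot,\Id)$ is constant on the unit sphere, then apply \eqref{ass:scaling} for the $|q|^\alpha$ factor, and finally read off the bounds from \eqref{ass:boundedness} with $X = \Id$. The only cosmetic difference is that you fix a base point $q_0$ and define $c_F$ at the outset, whereas the paper compares two arbitrary unit vectors first and names the constant afterwards.
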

\begin{proof}
	Let $q_1,q_2 \in \R^n$ be arbitrary with $\abs{q_1} = \abs{q_2} = 1$. There is some $Q \in \O$ such that
	\[
		q_1 = Q^{-1} q_2 = Q^\T q_2.
	\]
	Then, by \eqref{ass:rotinvariance},
	\[
		F(q_1, \Id) = F(Q^\T q_2, \Id) = F(Q^\T q_2, Q^\T Q)
		= F(q_2, \Id).
	\]
	So there must be a constant $\cF$ such that
	\[
		F(q, \Id) = -\cF
	\]
	for any $q$ with $\abs{q} = 1$. Taking any other $q \in \R^n \setminus \{0\}$ we find
	\[
		F(q, \Id) = -\cF \abs{q}^\alpha,
	\]
	using \eqref{ass:scaling}.
	The estimate follows from \eqref{ass:boundedness}.
\end{proof}

\kskip

\begin{lemma}	\label{lem:c_Q}
	There is a constant $\cQ \in \R$ with
	\[
		F(q, q \otimes q) = - \cQ \abs{q}^{\alpha+2}
	\]
	for any $q \in \R^n \setminus \{0\}$, satisfying $0 \leq \cQ \leq \cF$.
\end{lemma}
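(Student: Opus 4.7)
The plan is to mimic the structure of the proof of Lemma~\ref{lem:c_F}, using rotation invariance, scaling, and linearity to isolate a single constant, and then extract the bounds $0 \leq \cQ \leq \cF$ from degenerate ellipticity applied to the comparisons $0 \leq \hat q \otimes \hat q \leq \Id$ for a unit vector $\hat q$.

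First, I would restrict to unit vectors and show that $F(q, q \otimes q)$ is independent of $q$ there. Given two unit vectors $q_1, q_2$, pick $Q \in \O$ with $q_1 = Q^\T q_2$; then the key algebraic identity is $q_1 \otimes q_1 = (Q^\T q_2)(Q^\T q_2)^\T = Q^\T (q_2 \otimes q_2) Q$, so \eqref{ass:rotinvariance} gives $F(q_1, q_1 \otimes q_1) = F(q_2, q_2 \otimes q_2)$. Define $-\cQ$ to be this common value.

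Next I would extend to arbitrary $q \in \R^n \setminus \{0\}$ by writing $q = \lambda \hat q$ with $\lambda = \abs{q}$ and $\abs{\hat q} = 1$. Then $q \otimes q = \lambda^2 \, \hat q \otimes \hat q$, so by \eqref{ass:linearity}
\[
F(q, q \otimes q) = \lambda^2 \, F(q, \hat q \otimes \hat q),
\]
and by \eqref{ass:scaling}
\[
F(q, \hat q \otimes \hat q) = \abs{\lambda}^\alpha F(\hat q, \hat q \otimes \hat q) = -\abs{\lambda}^\alpha \cQ.
\]
Combining gives $F(q, q \otimes q) = -\cQ \abs{q}^{\alpha+2}$, as required.

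Finally, for the estimate, I would use degenerate ellipticity (noted in the text to follow from \eqref{ass:linearity} and \eqref{ass:boundedness}) together with the matrix inequalities $0 \leq \hat q \otimes \hat q \leq \Id$. The lower bound $\cQ \geq 0$ follows by comparing $F(q, q \otimes q)$ with $F(q, 0) = 0$ (the latter equality coming from \eqref{ass:linearity}). For the upper bound, apply degenerate ellipticity to $\Id \geq \hat q \otimes \hat q$ to obtain $F(q, \Id) \leq F(q, \hat q \otimes \hat q)$; after rewriting both sides via Lemma~\ref{lem:c_F} and the scaling computation above as $-\cF \abs{q}^\alpha$ and $-\cQ \abs{q}^\alpha$ respectively, this reads $\cQ \leq \cF$. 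The main subtlety is noticing that the bound $\cQ \leq \cmax$ coming directly from \eqref{ass:boundedness} is too weak, so one genuinely needs the sharper comparison with $\Id$ via degenerate ellipticity and Lemma~\ref{lem:c_F}.
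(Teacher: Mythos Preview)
Your proof is correct and follows essentially the same strategy as the paper: rotation invariance to pin down a constant on the unit sphere, then \eqref{ass:linearity} and \eqref{ass:scaling} to extend, and finally the ordering $0 \leq q \otimes q \leq \abs{q}^2 \Id$ together with degenerate ellipticity and Lemma~\ref{lem:c_F} for the bounds. The one noteworthy difference is that the paper carries out the rotation-invariance step by explicitly diagonalizing $q \otimes q$ and computing the components of $Q^\T q$, whereas you use the cleaner algebraic identity $(Q^\T q)\otimes(Q^\T q) = Q^\T (q \otimes q) Q$ to apply \eqref{ass:rotinvariance} in one line; your route is shorter but the underlying idea is identical.
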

\begin{proof}
	Let $q = (q_1,\ldots,q_n)^\T \in \R^n \setminus \{ 0 \}$. First we note that
	\begin{align*}
		q \otimes q
			=	[q_1 \, q, \ldots q_n \, q],
	\end{align*}
	so all $n$ columns are linear dependent. Thus the matrix has $n-1$ times the eigenvalue $0$. Also we note that $q$ itself is an eigenvector with corresponding eigenvalue $\abs{q}^2$. So let $Q \in \O$ be a diagonalizing matrix in a way that
	\[
		[Q^\T (q \otimes q) Q]_{_{1,1}} = \abs{q}^2
	\]
	and
	\[
		[Q^\T (q \otimes q) Q]_{_{i,j}} = 0
	\]
	for $2 \leq i,j \leq n$. We compute
	\begin{align*}
		[Q^\T (q \otimes q) Q]_{_{i,j}}
			=	\sum_{k=1}^n Q^\T_{i,k} \sum_{l=1}^n q_k q_l \, Q_{l,j}
			=	\sum_{k=1}^n Q^\T_{i,k} \, q_k \sum_{l=1}^n Q^\T_{j,l} \, q_l
			=	[Q^\T \, q]_{_{i}} [Q^\T \, q]_{_{j}}
	\end{align*}
	to find that
	\begin{align*}
		[Q^\T \, q]_{_{1}}^2
			=	[Q^\T (q \otimes q) Q]_{_{1,1}}
			=	\abs{q}^2
	\end{align*}
	and
	\begin{align*}
	[Q^\T \, q]_{_{i}}^2
			=	[Q^\T (q \otimes q) Q]_{_{i,i}}
			=	0.
	\end{align*}
	for $2 \leq i \leq n$. So $F(q, q \otimes q) = F(Q^\T q, Q^\T (q \otimes q) Q)$ only depends on $\abs{q}$.
	We conclude that there is a function $h : \R \to \R$ such that
	\[
		F(q, q \otimes q) = h(\abs{q}).
	\]
	We deduce from \eqref{ass:linearity} and \eqref{ass:scaling} that there must be a constant $\cQ$ such that
	\[
		h(\abs{q})
			=	\abs{q}^{\alpha+2} h(1)
			=	- \cQ \abs{q}^{\alpha+2}.
	\]
	For the estimate we see
	\[
		0
			\leq	- F(q,q \otimes q)
			=		\cQ \abs{q}^{\alpha+2}
	\]
	and
	\[
		\cQ \abs{q}^{\alpha+2}
			=		- F(q,q \otimes q)
			\leq	- F(q, \abs{q}^2 \Id)
			=		\cF \abs{q}^{\alpha+2},
	\]
	using Lemma \ref{lem:\cF}.
\end{proof}
\kskip


Using these identities we may compute $F*$ for an arbitrary smooth radial function $(r \mapsto \phi(r)) \in C^2(0,\infty)$ with $r = \abs{x}$ and $\nabla \phi(r) \neq 0$ to obtain
\begin{equation}	\label{eq:cusp_derivation}
	F^*( \nabla \phi(r), \D^2 \phi(r) )
	=	-\abs{\phi'(r)}^\alpha
	\left(
		\phi''(r)	\cQ + \frac{1}{r} \phi'(r) (\cF - \cQ)
	\right).
\end{equation}
Therefore the equation
\[
	F^*( \nabla \phi(\abs{x}), \D^2 \phi(\abs{x}) ) = K	\qquad\text{in } \Omega\setminus\{0\}
\]
for any constant $K \geq 0$ is solved by
\begin{align*}
	\phi(r) :=
		a - \frac{\alpha+1}{\alpha+2}
		\left( \Thetac K \right)^\frac{1}{\alpha+1}
		r^\frac{\alpha+2}{\alpha+1}
\end{align*}
with an arbitrary constant $a \in \R$.
\kskip

\begin{proposition}	\label{prop:cusp_solution}
	Let $U \subset \Omega$ be bounded, $x_0 \in U$, and $K \geq 0$ be any nonnegative constant. Furthermore let $a \in \R$ be any constant. Then the function
	\[
		\Phi(x) :=
			a - \frac{\alpha+1}{\alpha+2}
			\left( \Thetac K \right)^\frac{1}{\alpha+1}
			\abs{x-x_0}^\frac{\alpha+2}{\alpha+1}
	\]
	satisfies
	\[
		F^*( \nabla \Phi, \D^2 \Phi) = K
	\]
	in $U \setminus \{x_0\}$.
\end{proposition}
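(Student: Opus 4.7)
The plan is to reduce the statement to a direct verification of the radial computation \eqref{eq:cusp_derivation} applied to the explicit profile introduced in the paragraph preceding the proposition. Since the operator $F$ depends only on $\nabla u$ and $\D^2 u$ (not on the base point $x$), neither $\nabla \Phi$, $\D^2 \Phi$ nor the envelope $F^*$ are affected by a translation, so I would assume $x_0 = 0$ without loss of generality and treat $\Phi$ as a radial function.

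Next I would write $\Phi(x) = \phi(\abs{x})$, where
\[
	\phi(r) := a - \frac{\alpha+1}{\alpha+2}\left( \Thetac K \right)^{1/(\alpha+1)} r^{(\alpha+2)/(\alpha+1)},
\]
so that $\phi \in C^2((0,\infty))$ with $\phi'(r) \neq 0$ for $r > 0$. This means the hypotheses for applying \eqref{eq:cusp_derivation} are met on $U \setminus \{0\}$, and the proposition reduces to verifying the scalar identity
\[
	-\abs{\phi'(r)}^\alpha \left( \phi''(r) \cQ + \frac{1}{r} \phi'(r) (\cF - \cQ) \right) = K
\]
for every $r > 0$.

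The remaining work is bookkeeping. Set $C := (\Thetac K)^{1/(\alpha+1)}$, so that $\phi'(r) = -C r^{1/(\alpha+1)}$ and $\phi''(r) = -\frac{C}{\alpha+1} r^{-\alpha/(\alpha+1)}$. Then $\abs{\phi'(r)}^\alpha = C^\alpha r^{\alpha/(\alpha+1)}$, while the bracketed term becomes $-\frac{C}{\alpha+1}\left( \cQ + (\alpha+1)(\cF - \cQ) \right) r^{-\alpha/(\alpha+1)}$. The key algebraic identity
\[
	\cQ + (\alpha+1)(\cF - \cQ) = \cF + \alpha (\cF - \cQ)
\]
combined with the definition $\Thetac = \frac{\alpha+1}{\cF + \alpha(\cF - \cQ)}$ is engineered precisely so that $C^{\alpha+1} \cdot \frac{\cF + \alpha(\cF - \cQ)}{\alpha+1} = K$. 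All powers of $r$ cancel, and the constants collapse to $K$.

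There is no real conceptual obstacle: the exponent $(\alpha+2)/(\alpha+1)$ and the prefactor in $\Phi$ were chosen exactly to make this cancellation happen. The only care needed is in handling the envelope $F^*$ at the point where $x = x_0$ would cause $\nabla \Phi$ to vanish — but the proposition explicitly restricts to $U \setminus \{x_0\}$, where $\phi'(r) \neq 0$ and hence $F^* = F$ by the remark following the definition of the semicontinuous envelopes.
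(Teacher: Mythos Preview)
Your proposal is correct and follows exactly the approach the paper intends: the paper's own proof is simply ``This is a straightforward computation,'' and you have supplied precisely that computation via \eqref{eq:cusp_derivation}. The only minor point is that your claim ``$\phi'(r)\neq 0$ for $r>0$'' requires $K>0$; when $K=0$ the function $\Phi$ is constant and the identity $F^*(0,0)=0$ follows directly from \eqref{ass:boundedness}, so this is a harmless edge case.
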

\begin{proof}
	This is a straightforward computation.
\end{proof}
\kskip
When taking $K = \inf_U f - \eps \geq 0$ for $\eps > 0$ suitable small, the function $\Phi$ just breaks the definition of supersolutions. So for any supersolution $u \in \LSC{\overline{\Omega}}$, the difference $u-\Phi$ cannot attain any local minima which is the idea for the following comparison principle. It extends the \emph{comparison with cones} \cite{evans2011everywhere} and \emph{comparison with polar quadratic polynomials} \cite{lu2008pde} principles.
\kskip
\begin{lemma}	\label{lem:comparison_with_radial}
	Let $U \subset \Omega$ be bounded, $x_0 \in U$ and $u \in \LSC{\overline{\Omega}}$ be a supersolution of \eqref{eq:main_equation}.
	Then
	\begin{align*}
		u(x) \geq \Phi(x)	\qquad\text{on } \partial(U \setminus\{x_0\})
		&&\implies&&
		u(x) \geq \Phi(x)	\qquad\text{in } U
	\end{align*}
	with
	\[
		\Phi(x) :=
			a - \frac{\alpha+1}{\alpha+2}
			\left( \Thetac \inf_U f \right)^\frac{1}{\alpha+1}
			\abs{x-x_0}^\frac{\alpha+2}{\alpha+1},
	\]
	and any constant $a \in \R$.
\end{lemma}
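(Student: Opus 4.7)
The plan is to argue by contradiction using a slight perturbation of $\Phi$ that strictly violates the supersolution inequality. Suppose that $u(x_*) < \Phi(x_*)$ for some $x_* \in U$ and set $\eta := \Phi(x_*) - u(x_*) > 0$; since by hypothesis $u(x_0) \geq \Phi(x_0) = a$, necessarily $x_* \neq x_0$. For small $\eps > 0$ let $\Phi_\eps$ denote the function given by Proposition \ref{prop:cusp_solution} with $K = \inf_U f - \eps > 0$ (and the same constant $a$), so that $F^*(\nabla \Phi_\eps, \D^2 \Phi_\eps) = \inf_U f - \eps$ on $U \setminus \{x_0\}$. Boundedness of $U$ yields $\Phi_\eps \to \Phi$ uniformly on $\overline{U}$ as $\eps \downarrow 0$, and $\Phi_\eps \geq \Phi$ pointwise, since lowering $K$ lowers the positive prefactor of $\abs{x-x_0}^{(\alpha+2)/(\alpha+1)}$.

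Next I would fix $\eps$ so small that $\sup_{\overline{U}}(\Phi_\eps - \Phi) < \eta/2$. The lower semicontinuous function $u - \Phi_\eps$ attains its minimum on the compact set $\overline{U}$, and that minimum is at most $(u-\Phi_\eps)(x_*) < -\eta/2$. On the other hand, on $\partial U$ one has $u - \Phi_\eps \geq \Phi - \Phi_\eps \geq -\eta/2$, and at $x_0$ we have $u(x_0) - \Phi_\eps(x_0) \geq 0$. Hence the minimum must be attained at some interior point $x_1 \in U \setminus \{x_0\}$; shifting $\Phi_\eps$ by a suitable constant produces a $C^2$ test function that touches $u$ from below at $x_1$.

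Applying the definition of viscosity supersolution with this test function at $x_1$ — note $\nabla \Phi_\eps(x_1) \neq 0$ because $x_1 \neq x_0$, so no semicontinuous envelope pathology appears — gives
\[
	F^*(\nabla \Phi_\eps(x_1), \D^2 \Phi_\eps(x_1)) \geq f(x_1) \geq \inf_U f,
\]
whereas by construction the left-hand side equals $\inf_U f - \eps$. This yields $-\eps \geq 0$, the desired contradiction.

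The main obstacle is that the perturbation $\Phi_\eps$ lies \emph{above} $\Phi$, so the boundary hypothesis $u \geq \Phi$ on $\partial U$ does not directly upgrade to $u \geq \Phi_\eps$ on $\partial U$. The resolution is to exploit the strict gap $\eta$ at the hypothetical counterexample $x_*$: choosing $\eps$ small enough that the perturbation is dwarfed by $\eta/2$ keeps $u - \Phi_\eps$ strictly below $-\eta/2$ at $x_*$ while staying above $-\eta/2$ on all of $\partial(U \setminus\{x_0\})$, thereby forcing the minimum into the open set $U \setminus \{x_0\}$ where the supersolution property is available and the strict-inequality contradiction closes.
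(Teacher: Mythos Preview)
Your argument is correct and follows essentially the same strategy as the paper: perturb the radial barrier by lowering $K$ to $\inf_U f-\eps$, force the minimum of $u-\Phi_\eps$ into $U\setminus\{x_0\}$, and contradict the supersolution inequality there. The only cosmetic difference is that the paper additionally subtracts the constant $\tfrac{\alpha+1}{\alpha+2}\big(\Thetac\,\eps\big)^{1/(\alpha+1)}R^{(\alpha+2)/(\alpha+1)}$ from $\Phi_\eps$, which (by subadditivity of $t\mapsto t^{1/(\alpha+1)}$) makes $\Phi_\eps\le\Phi$ on all of $\overline U$ and so preserves the boundary inequality directly, whereas you keep $\Phi_\eps\ge\Phi$ and instead use the gap $\eta$ to trap the minimum in the interior; both devices are standard and equally valid.
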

\begin{proof}
	We assume there is some $\hat x \in U$ such that
	\begin{align*}
		u(x) \geq \Phi(x) \qquad\text{on } \partial(U \setminus\{x_0\})
		&&\text{but}&&
		u(\hat x) < \Phi(\hat x).
	\end{align*}
	Since $U$ is bounded, there is some ball of radius $R$ covering $U$. Then the function
	\begin{align*}
		\Phi_\eps(x) :=
		a &- \frac{\alpha+1}{\alpha+2}
		\left( \Thetac \left(\inf_U f - \eps\right) \right)^\frac{1}{\alpha+1}
		\abs{x-x_0}^\frac{\alpha+2}{\alpha+1}	\\
		&- \frac{\alpha+1}{\alpha+2}
		\left( \Thetac \eps \right)^\frac{1}{\alpha+1}
		R^\frac{\alpha+2}{\alpha+1}
	\end{align*}
	satisfies
	\[
		u(x) \geq \Phi(x) \geq \Phi_\eps(x) \qquad\text{on } \partial(U \setminus\{x_0\})
	\]
	while still keeping
	\[
		u(\hat x) < \Phi_\eps(\hat x)
	\]
	and $\inf_U f - \eps \geq  0$ for $\eps > 0$ sufficiently small. Therefore we may assume that $x \mapsto u(x) - \Phi_\eps(x)$ attains a local minimum at $\hat x$. There
	\[
		F^*( \nabla \Phi_\eps(\hat x), \D^2 \Phi_\eps(\hat x)) = \inf_U f - \eps < f(\hat x)
	\]
	holds, contradicting the assumption on $u$ being a supersolution of \eqref{eq:main_equation}.
\end{proof}
\kskip
The advantage of comparing supersolutions to an explicit function is that we also obtain a Hopf-type result.
\kskip
\begin{lemma}[Hopf's lemma]
	\label{lem:hopf}
	Let $u \in \LSC{\overline{\Omega}}$ be a supersolution of \eqref{eq:main_equation}, $\Omega$ satisfy the interior sphere condition, and let $x_0 \in \partial \Omega$ be a boundary point with
	\begin{equation}	\label{eq:hopf_assumption}
		u(x_0) < u(x)
	\end{equation}
	for all $x \in \Omega$. Then
	\[
		\limsup_{r \downarrow 0} \frac{u(x_0) - u(x_0 - r \mu)}{r} < 0
	\]
	for any $\mu \in \R^n$ with $\< \mu, \nu(x_0) \> > 0$
	with $\nu(x_0)$ denoting the outer normal at $x_0$.
\end{lemma}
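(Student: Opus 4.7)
The plan is to adapt the strategy of Lemma \ref{lem:comparison_with_radial} on an annulus centered at the center of an interior tangent ball, and then to read off the one-sided derivative bound from the resulting barrier.

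First, via the interior sphere condition, I would fix $y_0 \in \Omega$ and $R > 0$ with $B_R(y_0) \subset \Omega$ and $\partial B_R(y_0) \cap \partial \Omega = \{x_0\}$, so that $\nu(x_0) = (x_0 - y_0)/R$. Since $u$ is lower semicontinuous and strictly larger than $u(x_0)$ throughout $\Omega$, the minimum $m := \min_{\partial B_{R/2}(y_0)} u$ is strictly greater than $u(x_0)$; set $\delta := m - u(x_0) > 0$. On the annulus $A := B_R(y_0) \setminus \overline{B_{R/2}(y_0)}$ I would introduce the barrier modelled on Proposition \ref{prop:cusp_solution},
\[
	\Phi(x) := u(x_0) + C \left(R^{\frac{\alpha+2}{\alpha+1}} - \abs{x - y_0}^{\frac{\alpha+2}{\alpha+1}}\right),
	\qquad
	C := \frac{\alpha+1}{\alpha+2} \left(\Thetac K\right)^{\frac{1}{\alpha+1}},
\]
with $K > 0$ chosen small enough that $K < \inf_A f$ and simultaneously $C \left(R^{(\alpha+2)/(\alpha+1)} - (R/2)^{(\alpha+2)/(\alpha+1)}\right) < \delta$; since $C \to 0$ as $K \to 0^+$, both constraints can be met. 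By \eqref{eq:cusp_derivation}, $F^*(\nabla \Phi, \D^2 \Phi) \equiv K$ on $A$, while by construction $\Phi(x_0) = u(x_0)$, $\Phi \equiv u(x_0) < u$ on $\partial B_R(y_0) \setminus \{x_0\}$, and $\Phi < m \leq u$ on $\partial B_{R/2}(y_0)$.

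Next I would rerun the contradiction argument from the proof of Lemma \ref{lem:comparison_with_radial}, now on $A$: if $u(\hat x) < \Phi(\hat x)$ at some $\hat x \in A$, then after subtracting a small positive constant from $\Phi$ one arranges that $u - \Phi$ attains a local minimum at $\hat x$ while the boundary ordering $\Phi < u$ on $\partial A$ persists. The supersolution property then forces $F^*(\nabla \Phi(\hat x), \D^2 \Phi(\hat x)) \geq f(\hat x) \geq \inf_A f > K$, contradicting the identity above. This step is slightly simpler than the original Lemma \ref{lem:comparison_with_radial}, because the cusp at $y_0$ lies outside $A$ and $\Phi$ is smooth up to $\partial A$.

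Having $u \geq \Phi$ on $A$ with $u(x_0) = \Phi(x_0)$, the Hopf conclusion follows by one-sided differentiation of the smooth barrier: for any $\mu$ with $\<\mu, \nu(x_0)\> > 0$ and $r > 0$ sufficiently small, $x_0 - r\mu \in A$, so $u(x_0 - r\mu) \geq \Phi(x_0 - r\mu)$, and
\[
	\limsup_{r \downarrow 0} \frac{u(x_0) - u(x_0 - r \mu)}{r}
	\leq \lim_{r \downarrow 0} \frac{\Phi(x_0) - \Phi(x_0 - r \mu)}{r}
	= \<\nabla \Phi(x_0), \mu\>
	= -C\,\frac{\alpha+2}{\alpha+1}\, R^{\frac{1}{\alpha+1}} \<\nu(x_0), \mu\> < 0.
\]
The only real obstacle is the trade-off in choosing $K$: it must be small enough that $\Phi$ undercuts $u$ on the inner sphere, yet still strictly positive so that $\Phi$ breaks the supersolution inequality strictly. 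Once this is set up, the remainder is just the calculus of the smooth radial function $\Phi$.
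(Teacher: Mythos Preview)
Your proof is correct and follows essentially the same strategy as the paper: build the radial barrier $\Phi$ of Proposition \ref{prop:cusp_solution} centered at the interior-sphere center $y_0$, compare with $u$ on a region touching $x_0$, and read off the one-sided derivative from the smooth barrier. The only difference is cosmetic: the paper works on the punctured ball $B_R(y_0)\setminus\{y_0\}$ and invokes Lemma \ref{lem:comparison_with_radial} as a black box (choosing $\eps$ small so that $u(y_0)\ge\psi(y_0)$ at the single puncture), whereas you work on the annulus $B_R(y_0)\setminus\overline{B_{R/2}(y_0)}$ and rerun the comparison there, using lower semicontinuity to control $u$ on the inner sphere; this sidesteps the cusp of $\Phi$ at $y_0$ but is otherwise the same argument. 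One remark: since you already impose $K<\inf_A f$ strictly, the ``subtract a small positive constant'' step is unnecessary---an interior minimum of $u-\Phi$ in $A$ directly yields the contradiction $K=F^*(\nabla\Phi,\D^2\Phi)\ge f(\hat x)\ge\inf_A f>K$.
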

\begin{proof}
	Since $\Omega$ satisfies the interior sphere condition there is some $R > 0$ such that $B_R(y_0) \subset \Omega$ and $\partial B_R(y_0) \cap \partial \Omega = \{x_0\}$ with $y_0 := x_0 - R \nu(x_0)$.
	Since $f$ is positive, $u$ is also a supersolution of
	\[
		F(\nabla u, \D^2 u) = \eps \inf_\Omega f
	\]
	in $\Omega$ for all $\eps \in (0,1)$. We may take $\eps > 0$ so small that
	\begin{equation}	\label{eq:hopf_equation1}
		u(y_0) \geq
		u(x_0) + \frac{\alpha+1}{\alpha+2}
		\left( \Thetac \left(\eps \inf_{B_R(y_0)} f \right) \right)^\frac{1}{\alpha+1}
		R^\frac{\alpha+2}{\alpha+1}.
	\end{equation}
	Then we define
	\begin{align*}
		\psi(x) :=
		u(x_0) + \frac{\alpha+1}{\alpha+2}
		\left( \Thetac \left(\eps \inf_{B_R(y_0)} f \right) \right)^\frac{1}{\alpha+1}\times \\
		\left(R^\frac{\alpha+2}{\alpha+1} - \abs{x-y_0}^\frac{\alpha+2}{\alpha+1}\right).
	\end{align*}
	Now we have
	\[
		u(y_0) \geq \psi(y_0)
	\]
	by \eqref{eq:hopf_equation1} and
	\[
		u(x) > u(x_0) = \psi(x)
	\]
	on $\partial B_R(y_0)$ by \eqref{eq:hopf_assumption}. Therefore we have $u \geq \psi$ on $\partial (B_R(y_0) \setminus \{y_0\})$.
	Then Lemma \ref{lem:comparison_with_radial} implies $u \geq \psi$ in $B_R(y_0)$.
	\\
	
	Using $u(x_0) = \psi(x_0)$ and $x_0 - r \mu \in B_R(y_0)$ we conclude
	\[
		\frac{u(x_0) - u(x_0 - r \mu)}{r} \leq \frac{\psi(x_0) - \psi(x_0 - r \mu)}{r}
	\]
	for all $r > 0$ sufficiently small. Sending $r \downarrow 0$ we find
	\begin{align*}
		&\limsup_{r \downarrow 0} \frac{u(x_0) - u(x_0 - r \mu)}{r}	\\
		&\qquad\leq	\frac{\partial \psi}{\partial \mu}(x_0)	\\
		&\qquad=	-\left( \Thetac \left(\eps \inf_{B_R(y_0)} f\right) \, R \right)^\frac{1}{\alpha+1}	\< \nu(x_0), \mu \>\\
		&\qquad<	0,
	\end{align*}
	proving the assertion.
\end{proof}
	\section{Power concavity}
In this section we consider the special case of $f \equiv 1$ and show that any solution $u$ to
\begin{align}	\label{eq:main_equation_constant}
	F(\nabla u, \D^2 u) &= 1	\qquad\text{in } \Omega
\end{align}
with vanishing boundary values has the property that
\begin{equation}
	w := -u^\frac{\alpha+1}{\alpha+2}
\end{equation}
is a convex function. Therefore we first consider which equation $w$ must solve if $u$ is a solution to \eqref{eq:main_equation_constant}.

\begin{lemma}	\label{lem:transformed_equation}
	A function $u \in \USC{\Omega}$ is a positive subsolution to \eqref{eq:main_equation_constant} if and only if  $w 
	\in \LSC{\Omega}$ is a negative supersolution to
	\begin{align}	\label{eq:main_transformed}
		F(\nabla w, \D^2 w)
		=	\frac{1}{w} \left( \frac{\cQ}{\alpha+1} \abs{\nabla w}^{\alpha+2} + \left(\frac{\alpha+1}{\alpha+2}\right)^{\alpha+1} \right).
	\end{align}
\end{lemma}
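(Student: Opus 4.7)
The plan is to reduce the lemma to an algebraic identity between the two differential operators via the substitution $u = (-w)^{1/\beta}$, with $\beta := (\alpha+1)/(\alpha+2) \in (0,1)$, and then transfer that identity to the viscosity framework by a change of test functions.

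For the smooth computation, I would calculate
\[
\nabla u = -\tfrac{1}{\beta}u^{1-\beta}\nabla w, \qquad \D^2 u = -\tfrac{1}{\beta}u^{1-\beta}\D^2 w - \tfrac{\beta-1}{\beta^{2}}u^{1-2\beta}\,\nabla w \otimes \nabla w,
\]
pull the scalar $-u^{1-\beta}/\beta$ out of the gradient slot of $F$ via \eqref{ass:scaling} (absorbing the sign through \eqref{ass:rotinvariance} applied to $Q = -\Id$), split in the matrix slot using \eqref{ass:linearity}, and replace $F(\nabla w, \nabla w \otimes \nabla w)$ by $-\cQ|\nabla w|^{\alpha+2}$ via Lemma \ref{lem:c_Q}. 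The exponents of $u$ then collapse in a satisfying way: $(\alpha+1)(1-\beta) = \beta$ in the first resulting term, and $\alpha(1-\beta)+1-2\beta = 0$ in the rank-one term. Together with $u^\beta = -w$, $(1-\beta)/\beta = 1/(\alpha+1)$, and $\beta^{\alpha+1} = \left(\tfrac{\alpha+1}{\alpha+2}\right)^{\alpha+1}$, this turns $F(\nabla u, \D^2 u) = 1$ into exactly the stated equation for $w$, valid whenever $\nabla w \neq 0$.

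To pass to viscosity, let $\phi \in C^2$ touch $w$ from below at $x_0$ with $\phi(x_0) = w(x_0) < 0$; since $\phi$ is strictly negative on a neighborhood of $x_0$, the function $\psi := (-\phi)^{1/\beta}$ is $C^2$ there. The strict monotonicity of $s \mapsto (-s)^{1/\beta}$ on $(-\infty,0)$ converts the local inequality $w \geq \phi$ into $u \leq \psi$ with equality at $x_0$, so $\psi$ touches $u$ from above at $x_0$. Applying the subsolution inequality for $u$ with test function $\psi$ and invoking the pointwise identity above (with $\psi,\phi$ playing the roles of $u,w$) yields the supersolution inequality for $w$ with test function $\phi$. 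The reverse implication is obtained symmetrically, starting from any $\psi$ touching $u$ from above and setting $\phi := -\psi^\beta$.

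The main obstacle is the degenerate case $\nabla\phi(x_0) = 0$, where the identity was derived under $\nabla w \neq 0$ and where $F$ must be replaced by its envelopes $F_*$ and $F^*$. Then $\nabla\psi(x_0) = 0$ as well, the rank-one term in $\D^2 \psi$ vanishes, and $\D^2\psi(x_0) = -\beta^{-1}u(x_0)^{1-\beta}\D^2\phi(x_0)$. Linearity \eqref{ass:linearity} in the matrix slot combined with the sign flip of $\inf$ under negative scaling gives $F_*(0, -cX) = -c\,F^*(0, X)$ for $c > 0$. For $\alpha > 0$, bound \eqref{ass:boundedness} forces $F_*(0,\cdot) = F^*(0,\cdot) = 0$, so both viscosity conditions become trivial, since the RHS of the transformed equation at $\nabla\phi = 0$ reduces to the negative number $\beta^{\alpha+1}/\phi(x_0)$. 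For $\alpha = 0$, the threshold $-1/c$ with $c = 2u(x_0)^{1/2}$ coincides with $\beta^{\alpha+1}/\phi(x_0) = -1/\bigl(2u(x_0)^{1/2}\bigr)$, so the two envelope inequalities are literally equivalent, closing the lemma.
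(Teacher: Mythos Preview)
Your proposal is correct and follows precisely the route the paper indicates: the paper's own proof is the single sentence ``The proof is a straightforward computation which relies on \eqref{ass:linearity} and Lemma~\ref{lem:c_Q},'' and what you have written is exactly that computation carried out in full, together with the (standard) transfer to the viscosity setting via composition of test functions and a careful treatment of the degenerate point $\nabla\phi(x_0)=0$. Your exponent bookkeeping $(\alpha+1)(1-\beta)=\beta$, $\alpha(1-\beta)+1-2\beta=0$, $(1-\beta)/\beta=1/(\alpha+1)$ is accurate, and the envelope identity $F_*(0,-cX)=-c\,F^*(0,X)$ for $c>0$ (from \eqref{ass:linearity}) makes the $\alpha=0$ endpoint go through as claimed; one minor remark is that \eqref{ass:scaling} already absorbs the sign via $|\lambda|^\alpha$, so invoking \eqref{ass:rotinvariance} with $Q=-\Id$ is not needed.
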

\begin{proof}
	The proof is a straightforward computation which relies on \eqref{ass:linearity} and Lemma \ref{lem:c_Q}.
\end{proof}
\kskip

In the second step we make sure that the convex envelope of $w$, defined by
\begin{align*}
	w_{**}(x) := \sup \big\{ \sum_{i = 1}^{k} \mu_i w(x_i)
	~\vert~
	&x = \sum_{i = 1}^{k} \mu_i x_i,\\
	&x_i \in \overline{\Omega},
	0 \leq \mu_i \leq 1,
	\sum_{i=1}^{k} \mu_i = 1,
	k \leq n+1 \big\},
\end{align*}
is not spanned by any boundary points.
\kskip

\begin{lemma}	\label{lem:convex_decomp}
	Let $u \in \LSC{\overline{\Omega}}$ be a positive supersolution to \eqref{eq:main_equation_constant}.
	Furthermore let $x \in \Omega$, $x_1, \ldots x_k \in \overline{\Omega}$, $\sum_{i=1}^{k} \mu_i = 1$ with
	\begin{align*}
		x = \sum_{i=1}^{k} \mu_i x_i
		&&\text{and}&&
		w_{**}(x) = \sum_{i=1}^{k} \mu_i w(x_i).
	\end{align*}
	Then $x_1, \ldots, x_k \in \Omega$.
\end{lemma}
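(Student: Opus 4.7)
The plan is to argue by contradiction: assume the optimal decomposition contains a boundary point, say $x_1 \in \partial\Omega$ with $\mu_1 > 0$. Since $u$ vanishes on $\partial\Omega$ we have $w(x_1) = 0$, while $w(x) = -u(x)^{(\alpha+1)/(\alpha+2)} < 0$ because $x \in \Omega$ and $u > 0$ there. The first step is a routine convex-analysis observation: combining $w_{**} \leq w$ pointwise with the convexity estimate
\[
	w_{**}(x) \leq \sum_{i} \mu_i w_{**}(x_i) \leq \sum_i \mu_i w(x_i) = w_{**}(x)
\]
forces $w_{**}(x_i) = w(x_i)$ for every $i$ with $\mu_i > 0$, and moreover makes $w_{**}$ coincide with the affine interpolation on the convex hull of $\{x_1,\ldots,x_k\}$. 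In particular, along the segment $\gamma(t) := x_1 + t(x - x_1)$ for $t\in[0,1]$, one gets $w_{**}(\gamma(t)) = t\, w_{**}(x)$, a finite negative linear function of $t$.

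Next I would invoke Lemma~\ref{lem:hopf} at the boundary point $x_1$. Its hypotheses are satisfied: $\Omega$ satisfies the interior sphere condition by assumption and $u(x_1) = 0 < u(y)$ for every $y \in \Omega$. By convexity of $\Omega$, the supporting hyperplane at $x_1$ obeys $\langle y - x_1, \nu(x_1) \rangle \leq 0$ for all $y \in \overline{\Omega}$, and the inequality is strict for $y \in \Omega$ (otherwise a ball around the interior point $y$ would straddle the hyperplane). Hence $\mu := -(x-x_1)/\abs{x-x_1}$ satisfies $\langle \mu, \nu(x_1)\rangle > 0$ and is admissible in Hopf's lemma, which yields a constant $c > 0$ with
\[
	u(\gamma(t)) \geq c\, t\,\abs{x-x_1} \qquad\text{for all sufficiently small } t > 0,
\]
and therefore
\[
	w(\gamma(t)) \leq -\bigl(c\, t\,\abs{x-x_1}\bigr)^{\frac{\alpha+1}{\alpha+2}}.
\]

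Combining this with $w \geq w_{**}$ along $\gamma$ gives $t\, w_{**}(x) \leq -\bigl(c\, t\,\abs{x-x_1}\bigr)^{(\alpha+1)/(\alpha+2)}$, which rearranges to $w_{**}(x) \leq -C\, t^{-1/(\alpha+2)}$ and blows up as $t \downarrow 0$ because $(\alpha+1)/(\alpha+2) < 1$. This contradicts $w_{**}(x)$ being finite (it is bounded below by $\min_{\overline{\Omega}} w > -\infty$ since $u$ is bounded on $\overline{\Omega}$). Hence no $x_i$ lies on $\partial\Omega$. The main subtlety is the convex-analysis step extracting the affine behavior of $w_{**}$ on the convex hull of the optimal support; once that linear decay along $\gamma$ is recorded, its incompatibility with the $t^{(\alpha+1)/(\alpha+2)}$-behavior forced by Hopf's lemma is essentially immediate.
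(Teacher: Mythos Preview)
Your argument is correct and follows essentially the same route as the paper: assume a boundary point $x_1$ appears in the optimal decomposition, use that $w_{**}$ is affine on the convex hull so that $w_{**}$ decays linearly along a segment issuing from $x_1$ into $\Omega$, and then invoke Hopf's Lemma~\ref{lem:hopf} to force $w$ to decay only like $t^{(\alpha+1)/(\alpha+2)}$ there, which is incompatible with $w_{**}\le w$. The only cosmetic differences are that the paper works on the segment $[x_1,x_2]$ rather than $[x_1,x]$ and converts the difference quotient via Young's inequality instead of your direct power bound; your handling of the Hopf direction (taking $\mu=-(x-x_1)/|x-x_1|$ so that $\langle\mu,\nu(x_1)\rangle>0$) is in fact cleaner than the paper's, which contains a sign slip at that step.
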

\begin{proof}
	We assume this is not true. So without loss of generality let $x_1 \in \partial \Omega$.
	There must be at least one $x_i \not\in \partial\Omega$ because otherwise we would have
	\[
		0 > w(x) \geq w_{**}(x) = 0.
	\]
	So, again without loss of generality, we may assume $x_2 \in \Omega$. Then, by definition, $w_{**}$ is affine on the segment $[x_1,x_2]$.
	So there is some finite constant $c > 0$ such that
	\begin{align}	\label{eq:proof_conv_int1}
		\tfrac{w_{**}(x_1) - w_{**}(x_1 - t(x_2-x_1))}{t}
	= c
	\end{align}
	for all $t \in (0,1)$.
	\\
	On the other hand we may use Young's inequality to find
	\[
		u(x_1)^\frac{\alpha+1}{\alpha+2} \, u(x_1 - t (x_2-x_1))^{1-\tfrac{\alpha+1}{\alpha+2}}
		\leq \tfrac{\alpha+1}{\alpha+2} u(x_1) + \left(1-\tfrac{\alpha+1}{\alpha+2}\right)u(x_1 - t (x_2-x_1)).
	\]
	By rearranging terms and using the definition of $w$ we obtain
	\begin{align}	\label{eq:proof_conv_int2}
		\tfrac{w(x_1) - w(x_1 - t (x_2-x_1))}{t}
		\geq	- \tfrac{\alpha+1}{\alpha+2} \tfrac{1}{u(x_1 - t (x_2-x_1))^\frac{1}{\alpha+2}} \tfrac{u(x_1)-u(x_1-t(x_2-x_1))}{t}.
	\end{align}
	Since $u$ is positive in $\Omega$ we have $u(x_1 - t (x_2-x_1)) \downarrow 0 $ as $t \downarrow 0$. Furthermore since $\Omega$ is convex we also have $\< x_2 - x_1, \nu(x_1) \> > 0$ and may invoke Hopf's Lemma \ref{lem:hopf} to obtain
	\[
		\limsup_{t \downarrow 0} \tfrac{u(x_1) - u(x_1 - t(x_2-x_1))}{t} < 0.
	\]
	This together with \eqref{eq:proof_conv_int2} implies
	\[
		\tfrac{w(x_1) - w(x_1 - t (x_2-x_1))}{t} \to \infty
	\]
	as $t \downarrow 0$, contradicting \eqref{eq:proof_conv_int1} since $w_{**}(x_1) = w(x_1) = 0$ and $-w_{**} \geq -w$.
\end{proof}
\kskip

In the third step we show that the convex envelope $w_{**}$ is a supersolution. We first prove some technical consideration and proceed to the more difficult proof right after.
\begin{lemma} \label{lem:operator_concavity}
	For every $q \in \R^n$ the mapping $A \mapsto \frac{1}{-F^*(q,A^{-1})}$ is concave in $\Sympp$.
\end{lemma}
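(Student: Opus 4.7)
The plan is to fix $q \in \R^n \setminus \{0\}$ and abbreviate $g(A) := -F^*(q, A^{-1})$ for $A \in \Sympp$. First I would establish three structural properties of $g$: it is strictly positive, it is convex in $A$, and it is positively homogeneous of degree $-1$. Positivity follows from \eqref{ass:boundedness} applied to $X = A^{-1}$, since $\lmin(A^{-1}) = 1/\lmax(A) > 0$ whenever $A \in \Sympp$. Convexity in $A$ is precisely \eqref{ass:operator_concavity} after multiplying by $-1$. The homogeneity comes from setting $b = 0$ in \eqref{ass:linearity} to read off $F(q, cX) = c\, F(q, X)$ for $c \in \R$; since positive scalars commute with the supremum and with the limit $r \downarrow 0$ in the definition of $F^*$, this scaling passes to $F^*(q, (cA)^{-1}) = c^{-1} F^*(q, A^{-1})$, i.e.\ $g(cA) = c^{-1} g(A)$ for all $c > 0$.

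The main observation is that convexity of $g$ on its own is not enough to conclude that $1/g$ is concave — positive convex functions can easily have non-concave reciprocals, as the example $x \mapsto 1/x^2$ shows — but the \emph{combination} of convexity with degree $-1$ homogeneity is strong enough, via a normalization trick. Given $A, B \in \Sympp$ and $\mu \in [0,1]$, rescale to
\[
	M := g(A)\, A, \qquad N := g(B)\, B,
\]
so that $g(M) = g(N) = 1$ by homogeneity. Setting
\[
	\lambda := \tfrac{\mu}{g(A)} + \tfrac{1-\mu}{g(B)} > 0,
	\qquad
	\alpha := \tfrac{1}{\lambda} \cdot \tfrac{\mu}{g(A)} \in [0,1],
\]
a direct verification shows that the convex combination decomposes as
\[
	\mu A + (1-\mu) B = \lambda \bigl( \alpha M + (1-\alpha) N \bigr).
\]
Convexity of $g$ applied to the right-hand side gives $g(\alpha M + (1-\alpha) N) \leq \alpha g(M) + (1-\alpha) g(N) = 1$, and homogeneity then produces
\[
	g\bigl( \mu A + (1-\mu)B \bigr)
	= \lambda^{-1}\, g\bigl(\alpha M + (1-\alpha) N \bigr)
	\leq \lambda^{-1}.
\]
Inverting this inequality is exactly the desired concavity
\[
	\tfrac{1}{g(\mu A + (1-\mu)B)}
	\geq \lambda
	= \mu\,\tfrac{1}{g(A)} + (1-\mu)\,\tfrac{1}{g(B)}.
\]

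The only step that requires a little care is checking that positive scaling really does survive the passage to the upper semicontinuous envelope $F^*$, but this is immediate from the definition. Beyond that, the argument is purely algebraic on the cone $\Sympp$ and does not touch any finer properties of matrix inversion or of the operator $F$; assumptions \eqref{ass:scaling}, \eqref{ass:rotinvariance} and the constants $\cF, \cQ$ of Lemmas~\ref{lem:c_F} and \ref{lem:c_Q} play no role here.
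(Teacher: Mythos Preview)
Your argument is correct and is precisely the adaptation the paper has in mind: the proof in \cite[p.~287]{alvarez1997convex} for $A \mapsto 1/\tr(A^{-1})$ rests on exactly the three ingredients you isolate---positivity, convexity, and degree~$-1$ homogeneity---and the same normalization trick, with \eqref{ass:operator_concavity} supplying convexity and \eqref{ass:linearity} supplying homogeneity once you pass to $F^*$. The only cosmetic point is that the statement is phrased for all $q \in \R^n$ while you fix $q \neq 0$; when $\alpha > 0$ the case $q = 0$ is vacuous since $g \equiv 0$, and when $\alpha = 0$ your argument goes through verbatim for $F^*(0,\cdot)$.
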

\begin{proof}
	The proof is very similar to \cite[p.287]{alvarez1997convex}, showing the concavity of the mapping
	\[
		A \mapsto \frac{1}{\tr A^{-1}}.
	\]
	It can be adapted by replacing $\tr A^{-1}$ with $-F^*(q,A^{-1})$ and using \eqref{ass:operator_concavity}, so we omit the proof.
\end{proof}
\kskip

\begin{lemma}	\label{lem:convex_also_solution}
	Let $u \in \USC{\overline{\Omega}}$ be a positive subsolution to \eqref{eq:main_equation_constant} with $u = 0$ on $\partial \Omega$. Then
	$w_{**}$ is a supersolution to \eqref{eq:main_transformed} with $w_{**} = 0$ on $\partial \Omega$.
\end{lemma}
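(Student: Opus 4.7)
The strategy follows Alvarez--Lasry--Lions \cite{alvarez1997convex}: one combines their matrix lemma for subjets of the convex envelope with the concavity of Lemma \ref{lem:operator_concavity}.

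For the boundary condition, note that for $x_0 \in \partial \Omega$ every convex decomposition $x_0 = \sum_i \mu_i x_i$ with $x_i \in \overline{\Omega}$ has all $x_i$ lying in the smallest face of $\overline{\Omega}$ containing $x_0$, and in particular in $\partial \Omega$. Since $w \equiv 0$ on $\partial\Omega$, every such sum vanishes, so $w_{**}(x_0) = 0$.

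For the interior property, fix $\phi \in C^2(\Omega)$ and assume $w_{**} - \phi$ attains a local minimum at $x_0 \in \Omega$ with $w_{**}(x_0) = \phi(x_0)$; note $w_{**}(x_0) \leq w(x_0) < 0$. Carathéodory's theorem furnishes $x_1, \ldots, x_k \in \overline{\Omega}$ with $k \leq n+1$ and weights $\mu_i > 0$ summing to $1$, such that $x_0 = \sum_i \mu_i x_i$ realizes $w_{**}(x_0) = \sum_i \mu_i w(x_i)$. Lemma \ref{lem:convex_decomp} puts every $x_i$ into $\Omega$, hence $w(x_i) < 0$. After the standard perturbation $\phi \mapsto \phi - \eps \abs{x-x_0}^2$ to secure strict convexity, the matrix lemma of \cite{alvarez1997convex} produces $X_i \in \Sympp$ with $(\nabla \phi(x_0), X_i) \in \overline{J}_\Omega^{2,-} w(x_i)$ satisfying
\[
	\D^2 \phi(x_0) \leq \Bigl( \textstyle\sum_{i=1}^k \mu_i X_i^{-1} \Bigr)^{-1}.
\]

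Since $w$ is a supersolution of \eqref{eq:main_transformed} by Lemma \ref{lem:transformed_equation}, each $X_i$ satisfies $-F^*(\nabla \phi(x_0), X_i) \leq C / \abs{w(x_i)}$, where $C := \frac{\cQ}{\alpha+1} \abs{\nabla \phi(x_0)}^{\alpha+2} + \bigl(\frac{\alpha+1}{\alpha+2}\bigr)^{\alpha+1} > 0$; positivity of $-F^*(\nabla\phi(x_0), X_i)$ is guaranteed by $X_i \in \Sympp$ and \eqref{ass:boundedness}. Inverting, multiplying by $\mu_i$, summing and using $\sum_i \mu_i \abs{w(x_i)} = \abs{w_{**}(x_0)}$ gives $\abs{w_{**}(x_0)}/C \leq \sum_i \mu_i / (-F^*(\nabla \phi(x_0), X_i))$. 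Lemma \ref{lem:operator_concavity} then upgrades the right-hand side to $1/\bigl(-F^*(\nabla \phi(x_0), (\sum_i \mu_i X_i^{-1})^{-1})\bigr)$, and degenerate ellipticity applied to the matrix inequality above transports the estimate to $\D^2\phi(x_0)$, yielding $F^*(\nabla \phi(x_0), \D^2 \phi(x_0)) \geq C / w_{**}(x_0)$. Sending $\eps \downarrow 0$ closes the argument and is precisely the supersolution inequality for \eqref{eq:main_transformed}.

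The main obstacle is the matrix lemma of \cite{alvarez1997convex}, whose jet-doubling proof must be adapted to our setting and to the closure of subjets up to the boundary. The degenerate case $\nabla \phi(x_0) = 0$ needs a separate treatment exploiting the upper semicontinuity of $F^*$ together with the bound \eqref{ass:boundedness}.
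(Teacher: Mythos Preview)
Your approach is essentially the same as the paper's: both run the Alvarez--Lasry--Lions machinery, invoking Lemma~\ref{lem:convex_decomp} for interior decomposition, the matrix lemma \cite[Proposition~1]{alvarez1997convex} for the subjets $(q,X_i)$, Lemma~\ref{lem:operator_concavity} for the harmonic-mean step, and degenerate ellipticity to close. Two technical points deserve more care. First, \cite[Proposition~1]{alvarez1997convex} only yields $X_i\in\Symp$ (positive \emph{semi}definite), not $\Sympp$; the paper perturbs to $X_i+\tfrac{1}{n}\Id$ and passes to the limit, which you should make explicit since both the invertibility in the matrix inequality and the strict positivity of $-F^*(q,X_i)$ rely on it. Second, your handling of the degenerate case $\nabla\phi(x_0)=0$ is too vague: for $\alpha>0$ assumption \eqref{ass:boundedness} gives $F^*(0,X)=0$ directly, and since $w_{**}(x_0)<0$ the supersolution inequality $0\ge C/w_{**}(x_0)$ is immediate---no semicontinuity argument is needed.
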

\begin{proof}
	According to \cite[Lemma 1]{alvarez1997convex} we have $w_{**} = w = 0$ on $\partial \Omega$. So we only have to show that $w_{**}$ is indeed a supersolution to \eqref{eq:main_transformed}. This means showing that
	\[
		F^*(q, X) - \frac{1}{w_{**}(x)} \left( \frac{\cQ}{\alpha+1} \abs{q}^{\alpha+2} + \left(\frac{\alpha+1}{\alpha+2}\right)^{\alpha+1} \right)
		\geq	0
	\]
	for all $x \in \Omega$ and $(q,X) \in J_{\overline{\Omega}}^{2,-}w_{**}(x)$.\\
	Using \cite[Lemma 3]{alvarez1997convex} we only have to consider the case of $X \geq 0$.
	\\
	
	If $q = 0$ and $\alpha > 0$ we have
	\[
		F^*(q,X) = 0
	\]
	by \eqref{ass:boundedness}. So the assertion is obvious since $w_{**} \leq 0$ in $\overline{\Omega}$. So let us assume $q \neq 0$ or $\alpha = 0$.
	\\
	
	By Lemma \ref{lem:convex_decomp} we can decompose $x$ in a convex combination of interior points $x_1, \ldots, x_k \in \Omega$ such that
	\begin{align*}
		\sum_{i=1}^{k} \mu_i x_i = x
		&&\text{and}&&
		\sum_{i=1}^{k} \mu_i w(x_i) = w_{**}(x)
	\end{align*}
	for some $\mu_i, \ldots, \mu_k > 0$ with $\sum_{i=1}^{k} \mu_i = 1$.
	It is interesting to note that for $C^1$-solutions $\nabla w_{**}(x_i)$ is independent of $i \in \{1, \ldots, k\}$. Therefore the proof works for quasilinear equations. A similar effect happens in the classical proof in \cite[p. 117]{kawohl1985rearrangements}.
	Then by \cite[Proposition 1]{alvarez1997convex} for every $\eps > 0$ small enough, there are positive semidefinite matrices $X_1,\ldots,X_k \in \Symp$ such that
	\[
		X - \eps X^2
		\leq \left( \sum_{i=1}^{k} \mu_i X_i^{-1}\right)
		=: Y.
	\]
	We may assume that the matrices $X_1, \ldots, X_k$ are positive definite. Otherwise we consider matrices
	\[
		\tilde X_i := X_i + \tfrac{1}{n} \Id
	\]
	and afterwards take the limit $n \to \infty$ as in
	\cite[p. 273]{alvarez1997convex}.
	Using that by assumption $w$ is a supersolution to \eqref{eq:main_transformed}, we find
	\[
		F^*(q, X_i) \geq
		\frac{1}{w(x_i)}
		\left( \frac{\cQ}{\alpha+1} \abs{q}^{\alpha+2} + \left(\frac{\alpha+1}{\alpha+2}\right)^{\alpha+1} \right)
	\]
	for $i = 1,\ldots, k$.
	Since $X_i$ is positive definite and we are considering the case of $q \neq 0$ or $\alpha =0$, we have $F^*(q,X_i) < 0$ according to \eqref{ass:boundedness} which allows us, together with $w < 0$, to rearrange the preceding inequality to
	\[
		-w(x_i)
		\leq - \frac{1}{F^*(q, X_i)} \left( \frac{\cQ}{\alpha+1} \abs{q}^{\alpha+2} + \left(\frac{\alpha+1}{\alpha+2}\right)^{\alpha+1} \right)
	\]
	for $i = 1,\ldots, k$. By summation and rearranging terms again, we obtain
	\[
		-\frac{1}{\sum_{i=1}^{k} \mu_i w(x_i)} \left( \frac{\cQ}{\alpha+1} \abs{q}^{\alpha+2} + \left(\frac{\alpha+1}{\alpha+2}\right)^{\alpha+1} \right) \geq \left( \sum_{i=1}^{k} \mu_i \frac{1}{- F^*(q,X_i)} \right)^{-1}.
	\]
	Using that $F^*$ is degenerate elliptic and plugging in $X - \eps X^2 \leq Y$ we find
	\begin{align*}
		&F^*(q, X - \eps X^2) - \frac{1}{w_{**}(x)} \left( \frac{\cQ}{\alpha+1} \abs{q}^{\alpha+2} + \left(\frac{\alpha+1}{\alpha+2}\right)^{\alpha+1} \right)	\\
		&\qquad\geq F^*(q,Y) - \frac{1}{w_{**}(x)} \left( \frac{\cQ}{\alpha+1} \abs{q}^{\alpha+2} + \left(\frac{\alpha+1}{\alpha+2}\right)^{\alpha+1} \right) \\
		&\qquad= F^*(q,Y) - \frac{1}{\sum_{i=1}^{k} \mu_i w(x_i)} \left( \frac{\cQ}{\alpha+1} \abs{q}^{\alpha+2} + \left(\frac{\alpha+1}{\alpha+2}\right)^{\alpha+1} \right)	\\
		&\qquad\geq F^*(q,Y) + \left( \sum_{i=1}^{k} \mu_i \frac{1}{-F^*(q,X_i)}\right)^{-1}.
	\end{align*}
	By Lemma \ref{lem:operator_concavity} we have
	\begin{align*}
		\sum_{i=1}^{k} \mu_i \frac{1}{-F^*(q,X_i)}
		&= \sum_{i=1}^{k} \mu_i \frac{1}{-F^*(q,(X_i^{-1})^{-1})}	\\
		&\leq \frac{1}{-F^*(q, \sum_{i=1}^k \mu_i X_i^{-1})}	\\
		&= \frac{1}{-F^*(q, Y)}
	\end{align*}
	so we may combine these two estimates to obtain
	\[
		F^*(q, X - \eps X^2) \geq 0.
	\]
	Sending $\eps \downarrow 0$ concludes the assertion.
\end{proof}
\kskip

Finally we are able to present our first main result. In this last step we use comparison principles to show that $w_{**}$, being a supersolution, must be equal to $w$ itself.

\kskip
\begin{theorem}	\label{th:concavity}
	Let $u \in C(\Omega)$ be a positive viscosity solution to \eqref{eq:main_equation_constant} with $u = 0$ on $\partial \Omega$ in a convex domain $\Omega$ that satisfies the interior sphere condition. Furthermore we assume that \eqref{ass:linearity}, \eqref{ass:boundedness}, \eqref{ass:scaling}, \eqref{ass:rotinvariance}, and \eqref{ass:operator_concavity} hold.
	Then $u^\frac{\alpha+1}{\alpha+2}$ is concave.
\end{theorem}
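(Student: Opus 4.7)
The plan is to show that $w_{**} = w$ on $\overline{\Omega}$, where $w := -u^{\frac{\alpha+1}{\alpha+2}}$; since the convex envelope is by construction convex, this forces $w$ itself to be convex, hence $u^{\frac{\alpha+1}{\alpha+2}}$ to be concave.

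First I would observe that, because $u$ is a positive viscosity solution of \eqref{eq:main_equation_constant}, Lemma \ref{lem:transformed_equation} (together with its symmetric counterpart, obtained from the invertibility of the transformation $u \mapsto w$) gives that $w \in C(\overline{\Omega})$ is a viscosity solution of the transformed equation \eqref{eq:main_transformed} with $w = 0$ on $\partial \Omega$. In particular $w$ is a subsolution of \eqref{eq:main_transformed}. By Lemma \ref{lem:convex_also_solution} the convex envelope $w_{**}$ is a supersolution of the same equation and also vanishes on $\partial \Omega$, and by construction $w_{**} \leq w$ in $\overline{\Omega}$.

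Second, to get the reverse inequality $w \leq w_{**}$, I would invoke the comparison principle \cite[Theorem 1.3, Theorem 2.4]{lu2008uniqueness} applied to the subsolution $w$ and the supersolution $w_{**}$ of \eqref{eq:main_transformed}, which share the same boundary data $0$. Combining $w_{**} \leq w$ with $w \leq w_{**}$ forces $w = w_{**}$, which is the desired convexity.

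The main obstacle I expect is checking that the comparison principle of \cite{lu2008uniqueness} really does apply to \eqref{eq:main_transformed}, whose right hand side
\[
    g(x,z,q) = \tfrac{1}{z}\bigl( \tfrac{\cQ}{\alpha+1} \abs{q}^{\alpha+2} + \bigl(\tfrac{\alpha+1}{\alpha+2}\bigr)^{\alpha+1}\bigr)
\]
is singular on the boundary, where $w, w_{**} \to 0$. One has to verify that $-g$ is proper (monotone nondecreasing in $z$, which holds since the bracketed term is nonnegative and we are in the negative range $w, w_{**} < 0$ in $\Omega$) and that the operator $F$, under the structural assumptions \eqref{ass:linearity}--\eqref{ass:rotinvariance}, falls within the class covered by the cited theorems. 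A secondary technical point is to spell out explicitly the missing direction of Lemma \ref{lem:transformed_equation}, namely that a positive supersolution $u$ of \eqref{eq:main_equation_constant} produces a negative subsolution $w$ of \eqref{eq:main_transformed}; this follows from the same computation based on \eqref{ass:linearity} and Lemma \ref{lem:c_Q}, but needs to be recorded so that $w$ may be treated as a bona fide subsolution at the comparison step.
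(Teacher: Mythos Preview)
Your overall strategy --- showing $w_{**} = w$ by combining the trivial inequality $w_{**} \leq w$ with a comparison argument --- matches the paper's. The gap is precisely the obstacle you flag: applying \cite[Theorem 1.3, Theorem 2.4]{lu2008uniqueness} directly to the transformed equation \eqref{eq:main_transformed} is problematic because the right-hand side $g(x,z,q)$ blows up as $z \to 0^-$, i.e.\ exactly at the shared boundary data $w = w_{**} = 0$. Establishing a comparison principle for such a singular equation is not covered by the cited results and would be a separate undertaking.

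The paper sidesteps this by applying comparison at the $u$-level rather than the $w$-level. Since Lemma~\ref{lem:transformed_equation} is an \emph{if and only if}, once $w_{**}$ is known to be a negative supersolution of \eqref{eq:main_transformed} (via Lemma~\ref{lem:convex_also_solution}), the function $\tilde u := (-w_{**})^{\frac{\alpha+2}{\alpha+1}}$ is automatically a positive subsolution of the original equation \eqref{eq:main_equation_constant}, with $\tilde u = 0$ on $\partial\Omega$. Now the comparison principle from \cite{lu2008uniqueness} applies cleanly --- the right-hand side is the constant $1$ --- between the subsolution $\tilde u$ and the supersolution $u$, yielding $\tilde u \leq u$, i.e.\ $(-w_{**})^{\frac{\alpha+2}{\alpha+1}} \leq (-w)^{\frac{\alpha+2}{\alpha+1}}$, hence $w \leq w_{**}$. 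This also renders your ``secondary technical point'' unnecessary: one never needs $w$ itself to be a subsolution of \eqref{eq:main_transformed}, only the reverse implication already contained in Lemma~\ref{lem:transformed_equation}.
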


\begin{proof}
	By definition $u$ is both, a sub- and a supersolution to \eqref{eq:main_equation_constant}. Then, by Lemma \ref{lem:transformed_equation}, we find that $w := -u^\frac{\alpha+1}{\alpha+2}$ is a negative supersolution to \eqref{eq:main_transformed}. Using Lemma \ref{lem:convex_also_solution}, we obtain that $w_{**} \leq w$ is also a negative supersolution to \eqref{eq:main_transformed}.
	Then, again by Proposition \ref{lem:transformed_equation}, $(-w_{**})^\frac{\alpha+2}{\alpha+1}$ is a positive subsolution to \eqref{eq:main_equation_constant}. Invoking the comparison principle \cite[Theorem 1.3, Theorem 2.4]{lu2008uniqueness} we find
	\[
		(-w_{**})^\frac{\alpha+2}{\alpha+1}
		\leq	u
		=		(-w)^\frac{\alpha+2}{\alpha+1}.
	\]
	On the other hand we have $w_{**} \leq w \leq 0$, so $(-w_{**})^\frac{\alpha+2}{\alpha+1} \geq (-w)^\frac{\alpha+2}{\alpha+1}$ and finally
	\[
		(-w_{**})^\frac{\alpha+2}{\alpha+1} = (-w)^\frac{\alpha+2}{\alpha+1}.
	\]
	We may conclude that $w_{**} = w$, making $w$ a convex and $u^\frac{\alpha+1}{\alpha+2}$ a concave function.
\end{proof}
	\section{Log concavity}
In Section \ref*{sec:general} we have seen that we can compare supersolutions to \eqref{eq:main_equation} with functions
\begin{align*}
	\Phi(x) :=
		a - \frac{\alpha+1}{\alpha+2}
		\left( \Thetac K \right)^\frac{1}{\alpha+1}
		\abs{x-x_0}^\frac{\alpha+2}{\alpha+1}.
\end{align*}
It turns out that a similar result can be obtained for supersolutions to \eqref{eq:eigen_main_equation} by taking a suitable transformation. Indeed we may define
$\phi(r) := \Phi(x)$ with $r = \abs{x}$ to see that $\psi := \exp(\phi) > 0$ satisfies
\begin{align*}
	\frac{F^*(\nabla \psi(r), \D^2 \psi(r))}{\abs{\psi(r)}^\alpha \psi(r)}
	&= -\abs{\phi'(r)}^\alpha
	\left(
	\phi''(r)	\cQ + \frac{1}{r} \phi'(r) (\cF - \cQ) + \phi'(r)^2 \cQ
	\right)	\\
	&\leq -\abs{\phi'(r)}^\alpha
	\left(
	\phi''(r)	\cQ + \frac{1}{r} \phi'(r) (\cF - \cQ)
	\right)	\\
	&= K
\end{align*}
for $r > 0$, according to the preceding section.

\begin{proposition}	\label{prop:exp_cusp_solution}
	Let $U \subset \Omega$ be bounded, $x_0 \in U$, and $K \geq 0$ be any nonnegative constant. Furthermore let $a \in \R$ be any constant. Then the function
	\[
		\Psi(x) := \exp(\Phi(x))
	\]
	satisfies
	\[
		F^*( \nabla \Psi, \D^2 \Psi) \leq K \abs{\Psi}^\alpha \Psi
	\]
	in $U \setminus \{x_0\}$.
\end{proposition}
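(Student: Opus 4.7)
My plan is to reduce the statement to Proposition \ref{prop:cusp_solution} by a chain-rule computation, exactly mirroring the radial derivation displayed just above the proposition. On $U \setminus \{x_0\}$ the function $\Phi$ is smooth, so I first compute pointwise
\[
	\nabla \Psi = \Psi\, \nabla \Phi,\qquad
	\D^2 \Psi = \Psi\bigl( \D^2 \Phi + \nabla \Phi \otimes \nabla \Phi \bigr).
\]
For $K > 0$ we have $\nabla \Phi \neq 0$ off $x_0$, hence $\nabla \Psi = \Psi \nabla \Phi \neq 0$ there, and $F^*$ therefore coincides with $F$ at the relevant arguments. I would then pull the scalar $\Psi > 0$ out of the first slot using \eqref{ass:scaling} and out of the second slot using \eqref{ass:linearity} to obtain
\[
	F^*(\nabla \Psi, \D^2 \Psi) = \Psi^{\alpha+1}\, F^*\bigl(\nabla \Phi,\, \D^2 \Phi + \nabla \Phi \otimes \nabla \Phi\bigr).
\]

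Next, a second application of \eqref{ass:linearity} splits the matrix argument as
$F^*(\nabla \Phi, \D^2 \Phi) + F^*(\nabla \Phi, \nabla \Phi \otimes \nabla \Phi)$.
Proposition \ref{prop:cusp_solution} identifies the first summand as $K$, while Lemma \ref{lem:c_Q} evaluates the second as $-\cQ|\nabla \Phi|^{\alpha+2}$. Combining and using $\cQ \geq 0$ (again from Lemma \ref{lem:c_Q}) together with $\Psi > 0$ gives
\[
	F^*(\nabla \Psi, \D^2 \Psi) = \Psi^{\alpha+1}\bigl( K - \cQ\,|\nabla \Phi|^{\alpha+2} \bigr) \leq K\,\Psi^{\alpha+1} = K\,|\Psi|^\alpha \Psi,
\]
which is precisely the claimed inequality. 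The degenerate case $K = 0$ needs a brief separate word, since then $\Phi$ is constant and $\nabla \Psi \equiv 0$; here the statement reduces to $F^*(0,0) \leq 0$, which follows by evaluating \eqref{ass:boundedness} at $X = 0$ and taking the semicontinuous envelope.

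There is no serious obstacle, since the substance of the argument is the very computation already presented for the radial profile $\psi(r) = \exp(\phi(r))$ immediately before the proposition. The only conceptual point worth emphasising is that the exponential substitution contributes the rank-one term $\nabla \Phi \otimes \nabla \Phi$ beyond $\D^2 \Phi$, and this extra term is \emph{harmless} rather than harmful for the inequality because $\cQ \geq 0$ forces its $F^*$-value to be non-positive.
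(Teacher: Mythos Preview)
Your argument is correct and is exactly the computation the paper has in mind: the paper's own proof reads ``Again, the proof only involves computation,'' referring back to the radial identity displayed immediately before the proposition, which is the one-variable version of what you wrote. Your treatment of the borderline case $K=0$ via $F^*(0,0)=0$ is also fine.
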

\begin{proof}
	Again, the proof only involves computation.
\end{proof}
\kskip

\begin{lemma}	\label{lem:eigen_comparison_with_radial}
	Let $U \subset \Omega$ be bounded, $x_0 \in U$ and $u \in \LSC{\overline{\Omega}}$ be a supersolution to \eqref{eq:eigen_main_equation}.
	Then
	\begin{align*}
		u(x) \geq \Psi(x)	\qquad\text{on } \partial(U \setminus\{x_0\})
		&&\implies&&
		u(x) \geq \Psi(x)	\qquad\text{in } U
	\end{align*}
	with
	\[
		\Psi(x) := \exp(\Phi(x))
	\]
	for $K = \inf_U f$.
\end{lemma}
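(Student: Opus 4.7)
The strategy parallels Lemma \ref{lem:comparison_with_radial}, but since both sides of \eqref{eq:eigen_main_equation} now depend on $u$, a purely additive touching of a perturbed test function no longer produces a contradiction. Instead I will touch $u$ from below by the scalar multiple $t^\ast \tilde\Psi_\eps$ of a slight perturbation of $\Psi$, and exploit the $(\alpha{+}1)$-homogeneity $F^*(tq, tX) = t^{\alpha+1} F^*(q, X)$ for $t > 0$ that follows from \eqref{ass:linearity} and \eqref{ass:scaling}.

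\textbf{Step 1 (perturbation).} Fix $R > 0$ with $U \subset B_R(x_0)$ and, for small $\eps \in (0, \inf_U f)$, set
\[
\tilde \Psi_\eps(x) := \exp\!\left( a - \frac{\alpha+1}{\alpha+2}\bigl(\Thetac (\inf_U f - \eps)\bigr)^{\tfrac{1}{\alpha+1}} \abs{x - x_0}^{\tfrac{\alpha+2}{\alpha+1}} - \delta_\eps \right),
\]
where $\delta_\eps > 0$ is the explicit ($R$-dependent) constant that makes $\tilde\Psi_\eps \leq \Psi$ on $\overline U$, and $\delta_\eps \downarrow 0$ as $\eps \downarrow 0$ (same computation as in Lemma \ref{lem:comparison_with_radial}). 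Absorbing $-\delta_\eps$ into the free additive constant in Proposition \ref{prop:exp_cusp_solution} yields
\[
F^*(\nabla \tilde\Psi_\eps, \D^2 \tilde\Psi_\eps) \leq (\inf_U f - \eps)\, \tilde\Psi_\eps^{\alpha+1} \qquad\text{in } U \setminus \{x_0\}.
\]

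\textbf{Step 2 (multiplicative touching and contradiction).} Suppose, for contradiction, that $u(\hat x) < \Psi(\hat x)$ at some $\hat x \in U \setminus \{x_0\}$; by continuity $\tilde\Psi_\eps(\hat x) > u(\hat x)$ for $\eps$ small. Because $\tilde\Psi_\eps \leq \Psi \leq u$ on $\partial(U \setminus \{x_0\})$ while $\tilde\Psi_\eps(\hat x) > u(\hat x) > 0$, the lower semicontinuous quotient $h := u/\tilde\Psi_\eps$ satisfies $h \geq 1$ on that boundary and $h(\hat x) < 1$, so it attains an interior minimum $t^\ast := \min h \in (0,1)$ at some $\tilde x \in U \setminus \{x_0\}$ (positivity of $u$, inherited from $u \geq \Psi > 0$ on the boundary, gives $t^\ast > 0$). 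The test function $\phi := t^\ast \tilde\Psi_\eps$ is $C^2$ near $\tilde x$, $u - \phi \geq 0$ everywhere with equality at $\tilde x$, so $\phi$ touches $u$ from below there. The viscosity supersolution inequality together with the homogeneity of $F^*$ gives
\[
(t^\ast)^{\alpha+1} F^*(\nabla \tilde\Psi_\eps(\tilde x), \D^2 \tilde\Psi_\eps(\tilde x)) \;\geq\; f(\tilde x) (t^\ast)^{\alpha+1} \tilde\Psi_\eps(\tilde x)^{\alpha+1}.
\]
Combining with Step 1 and dividing by $(t^\ast)^{\alpha+1} \tilde\Psi_\eps(\tilde x)^{\alpha+1} > 0$ yields $\inf_U f - \eps \geq f(\tilde x) \geq \inf_U f$, a contradiction.

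\textbf{Main obstacle.} The decisive new ingredient is recognising that the additive touching used in Lemma \ref{lem:comparison_with_radial} cannot work here, because the right-hand side of \eqref{eq:eigen_main_equation} depends on $u(\tilde x)$ and is therefore altered by a constant shift of the test function. Replacing additive touching by the multiplicative touching via the ratio $u/\tilde\Psi_\eps$ restores compatibility with the $(\alpha{+}1)$-homogeneous scaling of $F$; after this, the $\eps$-perturbation introduced in Step 1 supplies precisely the strict separation needed for the contradiction. The minor additional subtlety is ensuring $t^\ast > 0$, which is why positivity of $u$ (standard in the eigenvalue setting) is invoked.
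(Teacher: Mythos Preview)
Your argument is correct and is \emph{not} the paper's argument. The paper repeats the additive scheme of Lemma~\ref{lem:comparison_with_radial} verbatim: it lets $u-\Psi_\eps$ attain an interior minimum at $\hat x$, records $F^*(\nabla\Psi_\eps(\hat x),\D^2\Psi_\eps(\hat x))\le(\inf_U f-\eps)\,\Psi_\eps(\hat x)^{\alpha+1}<f(\hat x)\,\Psi_\eps(\hat x)^{\alpha+1}$, and declares a contradiction with the supersolution property. Your objection to this is well taken: the viscosity supersolution inequality at $\hat x$ reads $F^*(\nabla\Psi_\eps(\hat x),\D^2\Psi_\eps(\hat x))\ge f(\hat x)\,\abs{u(\hat x)}^{\alpha}u(\hat x)$, and since $u(\hat x)<\Psi_\eps(\hat x)$ the two displayed bounds are compatible rather than contradictory. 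Your multiplicative touching via $t^\ast\tilde\Psi_\eps$ exploits the joint $(\alpha{+}1)$-homogeneity coming from \eqref{ass:linearity} and \eqref{ass:scaling} so that both sides of the eigenvalue equation scale identically, and this is exactly what closes the gap.

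One small point: your justification ``positivity of $u$, inherited from $u\ge\Psi>0$ on the boundary'' does not actually force $u>0$ inside $U$, so $t^\ast>0$ is an extra hypothesis rather than a consequence. In the paper's sole application (Lemma~\ref{lem:eigen_hopf}) the comparison is made with $v=u+\eps>0$, so this is harmless there; but if you want the lemma in the generality stated you should either add $u>0$ in $U$ to the hypotheses or treat the case $t^\ast\le 0$ separately.
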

\begin{proof}
	We assume there is some $\hat x \in U$ such that
	\begin{align*}
		u(x) \geq \Psi(x) \qquad\text{on } \partial(U \setminus\{x_0\})
		&&\text{but}&&
		u(\hat x) < \Psi(\hat x).
	\end{align*}
	Since $U$ is bounded, there is some ball of radius $R$ covering $U$. We define
	\begin{align*}
		\Phi_\eps(x) :=
		a &- \frac{\alpha+1}{\alpha+2}
		\left( \Thetac \left(\inf_U f - \eps\right) \right)^\frac{1}{\alpha+1}
		\abs{x-x_0}^\frac{\alpha+2}{\alpha+1}	\\
		&- \frac{\alpha+1}{\alpha+2}
		\left( \Thetac \eps \right)^\frac{1}{\alpha+1}
		R^\frac{\alpha+2}{\alpha+1}.
	\end{align*}
	Then we have $\Phi(x) \geq \Phi_\eps(x)$ so
	the function
	\[
		\Psi_\eps(x) := \exp(\Phi_\eps(x))
	\]
	satisfies
	\[
		u(x) \geq \Psi(x) \geq \Psi_\eps(x) \qquad\text{on } \partial(U \setminus\{x_0\})
	\]
	while still keeping
	\[
		u(\hat x) < \Psi_\eps(\hat x)
	\]
	and $\inf_U f - \eps \geq  0$ for $\eps > 0$ sufficiently small. Therefore we may assume that $x \mapsto u(x) - \Psi_\eps(x)$ attains a local minimum at $\hat x$. There
	\[
		F^*( \nabla \Psi_\eps(\hat x), \D^2 \Psi_\eps(\hat x))
		\leq \left(\inf_U f - \eps \right) \abs{\Psi_\eps(\hat x)}^\alpha \Psi_\eps(\hat x)
		< f(\hat x) \abs{\Psi_\eps(\hat x)}^\alpha \Psi_\eps(\hat x)
	\]
	holds, contradicting the assumption on $u$ being a supersolution to \eqref{eq:eigen_main_equation}.
\end{proof}
\kskip

\begin{lemma}[Hopf's lemma]
	\label{lem:eigen_hopf}
	Let $u \in \LSC{\overline{\Omega}}$ be a positive supersolution to \eqref{eq:eigen_main_equation}, $\Omega$ satisfy the interior sphere condition, and let $x_0 \in \partial \Omega$ be a boundary point with
	\begin{equation}	\label{eq:eigen_hopf_assumption}
		0 = u(x_0) < u(x)
	\end{equation}
	for all $x \in \Omega$. Then
	\[
		\limsup_{r \downarrow 0} \frac{u(x_0) - u(x_0 - r \mu)}{r} < 0
	\]
	for any $\mu \in \R^n$ with $\< \mu, \nu(x_0) \> > 0$
	with $\nu(x_0)$ denoting the outer normal at $x_0$.
\end{lemma}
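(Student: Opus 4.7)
The plan is to mirror the proof of Lemma~\ref{lem:hopf}, using Proposition~\ref{prop:exp_cusp_solution} in place of Proposition~\ref{prop:cusp_solution} and Lemma~\ref{lem:eigen_comparison_with_radial} in place of Lemma~\ref{lem:comparison_with_radial}. First, I invoke the interior sphere condition to produce $R>0$ with $B_R(y_0) \subset \Omega$ and $\partial B_R(y_0) \cap \partial\Omega = \{x_0\}$ for $y_0 := x_0 - R\nu(x_0)$. Since $f>0$ and $u>0$ in $\Omega$, for every $\eps\in(0,1)$ the function $u$ is also a positive supersolution of $F(\nabla u, \D^2 u) = \eps f \abs{u}^\alpha u$.

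Next, for $\eps>0$ sufficiently small I introduce the shifted exponential barrier
\[
\Psi_\eps(x) := \exp(\Phi_\eps(x)) - 1, \qquad \Phi_\eps(x) := \tfrac{\alpha+1}{\alpha+2}\bigl(\Thetac\,\eps\inf_{B_R(y_0)}f\bigr)^{\tfrac{1}{\alpha+1}}\bigl(R^{\tfrac{\alpha+2}{\alpha+1}} - \abs{x-y_0}^{\tfrac{\alpha+2}{\alpha+1}}\bigr),
\]
which satisfies $\Psi_\eps(x_0)=0$, $\Psi_\eps\equiv 0$ on $\partial B_R(y_0)$, and $\Psi_\eps>0$ in $B_R(y_0)$. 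Choosing $\eps$ small enough makes $\Psi_\eps(y_0)=\exp\bigl(\tfrac{\alpha+1}{\alpha+2}(\Thetac\,\eps\inf f)^{1/(\alpha+1)}R^{(\alpha+2)/(\alpha+1)}\bigr)-1$ no larger than $u(y_0)>0$, so $u\geq\Psi_\eps$ holds on $\partial(B_R(y_0)\setminus\{y_0\})$. Because $\Psi_\eps$ has the same gradient and Hessian as the unshifted exponential $\exp(\Phi_\eps)$, Proposition~\ref{prop:exp_cusp_solution} supplies the bound $F^*(\nabla\Psi_\eps,\D^2\Psi_\eps)\leq\eps\inf_{B_R(y_0)}f\cdot\exp((\alpha+1)\Phi_\eps)$ on $B_R(y_0)\setminus\{y_0\}$. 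A direct adaptation of the $\eps$-shift contradiction argument used in the proof of Lemma~\ref{lem:eigen_comparison_with_radial}---pitting this bound against the supersolution condition at any interior touching point and exploiting the slack between $f$ and $\eps f$---then rules out an interior violation and yields $u\geq\Psi_\eps$ on all of $B_R(y_0)$.

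Finally, since $u(x_0)=\Psi_\eps(x_0)=0$ and $\Psi_\eps$ is smooth at $x_0$, the one-sided derivative computation gives
\[
\limsup_{r\downarrow 0}\frac{u(x_0)-u(x_0-r\mu)}{r} \leq \frac{\partial\Psi_\eps}{\partial\mu}(x_0) = -\bigl(\Thetac\,\eps\inf_{B_R(y_0)}f\cdot R\bigr)^{\tfrac{1}{\alpha+1}} \<\nu(x_0),\mu\> < 0
\]
for every $\mu$ with $\<\mu,\nu(x_0)\> > 0$, which is the claim. The principal obstacle is the comparison step: the bound from Proposition~\ref{prop:exp_cusp_solution} controls $F^*(\nabla\Psi_\eps,\D^2\Psi_\eps)$ in terms of $(\Psi_\eps+1)^{\alpha+1}$ rather than $\Psi_\eps^{\alpha+1}$, and near $x_0$, where $\Psi_\eps$ is small, the extra constant term is delicate to absorb; closing the argument therefore requires tuning $\eps$ so small that the prefactor $\eps\inf f$ dominates the discrepancy between $(\Psi_\eps+1)^{\alpha+1}$ and $u^{\alpha+1}$ at the candidate touching point uniformly in $B_R(y_0)$.
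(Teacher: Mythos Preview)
Your self-identified ``principal obstacle'' is a genuine gap, and the fix you sketch does not close it. With your shifted barrier $\Psi_\eps = \exp(\Phi_\eps)-1$, Proposition~\ref{prop:exp_cusp_solution} only yields $F^*(\nabla\Psi_\eps,\D^2\Psi_\eps)\le \eps\inf f\cdot(\Psi_\eps+1)^{\alpha+1}$, whereas at a touching point $\hat x$ the supersolution inequality for $u$ gives $F^*\ge f(\hat x)\,u(\hat x)^{\alpha+1}=f(\hat x)\,\Psi_\eps(\hat x)^{\alpha+1}$. A contradiction would require $f(\hat x)\,\Psi_\eps(\hat x)^{\alpha+1}>\eps\inf f\,(\Psi_\eps(\hat x)+1)^{\alpha+1}$, but near $\partial B_R(y_0)$ (in particular near $x_0$) the left side tends to $0$ while the right side stays bounded below by $\eps\inf f>0$. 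Shrinking $\eps$ does not help uniformly: the location of the touching point depends on $\eps$ and can drift toward the boundary as $\eps\downarrow 0$, so no single choice of $\eps$ rules out all touching points.

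The paper circumvents this by shifting the \emph{solution} rather than the barrier. It sets $v:=u+\eps$ and compares $v$ with the unshifted exponential $\psi:=\eps\exp(\Phi)$ (multiplicative prefactor $\eps$, no additive $-1$). Then $\psi$ is exactly of the form in Proposition~\ref{prop:exp_cusp_solution} and Lemma~\ref{lem:eigen_comparison_with_radial}, so no ``$+1$'' discrepancy ever appears. On $\partial B_R(y_0)$ one has $v\ge \eps=\psi$, at $y_0$ the inequality $v(y_0)\ge\psi(y_0)$ holds for $\eps$ small, and the paper argues that $v$ is a supersolution of $F(\nabla v,\D^2 v)=\eps\inf_\Omega f\,|v|^\alpha v$; Lemma~\ref{lem:eigen_comparison_with_radial} then gives $v\ge\psi$ on $B_R(y_0)$. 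Since $v(x_0)=\eps=\psi(x_0)$, the one-sided derivative bound follows from $\partial\psi/\partial\mu(x_0)<0$. The idea you are missing is thus to lift $u$ by $\eps$ instead of lowering the barrier by $1$: this keeps the comparison function in the exact exponential form the auxiliary lemmas require and eliminates the mismatch between $\Psi_\eps^{\alpha+1}$ and $(\Psi_\eps+1)^{\alpha+1}$ altogether.
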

\begin{proof}
	Since $\Omega$ satisfies the interior sphere condition there is some $R > 0$ such that $B_R(y_0) \subset \Omega$ and $\partial B_R(y_0) \cap \partial \Omega = \{x_0\}$ with $y_0 := x_0 - R \nu(x_0)$.
	Since $f$ is positive, $v := u + \eps$ is a supersolution to
	\[
		F(\nabla v, \D^2 v) = \eps \inf_\Omega f \abs{v}^\alpha v
	\]
	in $\Omega$ for $\eps > 0$ small. We may take $\eps > 0$ so small that
	\begin{equation}	\label{eq:eigen_hopf_equation1}
		v(y_0) \geq
		\eps \exp\left(\frac{\alpha+1}{\alpha+2}
		\left( \Thetac \left( \eps \inf_{B_R(y_0)} f \right) \right)^\frac{1}{\alpha+1}
		R^\frac{\alpha+2}{\alpha+1}\right).
	\end{equation}
	Then we define
	\begin{align*}
		\psi(x) :=
		\eps  \exp\left(\frac{\alpha+1}{\alpha+2}
		\left( \Thetac \left( \eps \inf_{B_R(y_0)} f \right) \right)^\frac{1}{\alpha+1} \times \right.\\\left.
		\left(R^\frac{\alpha+2}{\alpha+1} - \abs{x-y_0}^\frac{\alpha+2}{\alpha+1}\right)\right).
	\end{align*}
	Now we have
	\[
		v(y_0) \geq \psi(y_0)
	\]
	by \eqref{eq:eigen_hopf_equation1} and
	\[
		v(x) \geq \eps = \psi(x)
	\]
	on $\partial B_R(y_0)$ by \eqref{eq:eigen_hopf_assumption}. Therefore we have $v \geq \psi$ on $\partial (B_R(y_0) \setminus \{y_0\})$.
	Then Lemma \ref{lem:eigen_comparison_with_radial} implies $v \geq \psi$ in $B_R(y_0)$.
	\\
	
	Using $v(x_0) = \psi(x_0)$ and $x_0 - r \mu \in B_R(y_0)$ we conclude
	\[
		\frac{u(x_0) - u(x_0 - r \mu)}{r}
		= \frac{v(x_0) - v(x_0 - r \mu)}{r}
		\leq \frac{\psi(x_0) - \psi(x_0 - r \mu)}{r}
	\]
	for all $r > 0$ sufficiently small. Sending $r \downarrow 0$ we find
	\begin{align*}
		&\limsup_{r \downarrow 0} \frac{u(x_0) - u(x_0 - r \mu)}{r}	\\
		&\qquad\leq	\frac{\partial \psi}{\partial \mu}(x_0)	\\
		&\qquad=	- \eps^2 \left( \Thetac \left( \eps \inf_{B_R(y_0)} f\right) \, R \right)^\frac{1}{\alpha+1}	\< \nu(x_0), \mu \>\\
		&\qquad<	0,
	\end{align*}
	proving the assertion.
\end{proof}

We turn to showing that any positive solution $u$ to
\begin{align} \label{eq:eigen_main_equation_constant}
	F(\nabla u, \D^2 u) = \lambda \abs{u}^\alpha u
\end{align}
with vanishing boundary values, has the property that
\[
	w := -\log u
\]
is a convex function. Here $\lambda > 0$ denotes any positive eigenvalue. The procedure is the same as in the preceding  section. We first consider which problem $w$ is solving.

\begin{lemma}	\label{lem:eigen_transformed_equation}
	A function $u \in \USC{\Omega}$ is a positive subsolution to \eqref{eq:eigen_main_equation_constant} if and only if  $w 
	\in \LSC{\Omega}$ is a supersolution to
	\begin{align}	\label{eq:eigen_main_transformed}
	F(\nabla w, \D^2 w)
	=	-\abs{\nabla w}^{\alpha+2} \cQ - \lambda.
	\end{align}
\end{lemma}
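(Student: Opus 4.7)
The plan is to mirror the proof of Lemma \ref{lem:transformed_equation}, replacing the power transform by $w = -\log u$. Since $u > 0$ in $\Omega$, the map $u \mapsto -\log u$ is a smooth, strictly decreasing bijection onto its image, so $u \in \USC{\Omega}$ precisely when $w \in \LSC{\Omega}$, and test functions transform cleanly: if $\phi \in C^2$ touches $w$ from below at $x_0$ (local minimum of $w-\phi$ with $w(x_0)=\phi(x_0)$), then $\tilde\phi := e^{-\phi} \in C^2$ touches $u$ from above at $x_0$; conversely, a $C^2$ test function $\phi$ touching $u$ from above at $x_0$ satisfies $\phi(x_0) = u(x_0) > 0$, hence $\phi > 0$ in a neighborhood, and so $-\log \phi \in C^2$ near $x_0$ touches $w$ from below there. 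This yields the desired bijection between viscosity test pairs.

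Next I execute the core computation at a point with $\nabla\phi(x_0) \neq 0$. Differentiating $\tilde\phi = e^{-\phi}$ gives $\nabla\tilde\phi(x_0) = -u(x_0)\nabla\phi(x_0)$ and $\D^2\tilde\phi(x_0) = u(x_0)\bigl(\nabla\phi(x_0)\otimes\nabla\phi(x_0) - \D^2\phi(x_0)\bigr)$. Since the gradient is nonzero, $F_* = F^* = F$ at the relevant arguments, and I apply in turn: \eqref{ass:scaling} to pull out a factor $u^\alpha$, \eqref{ass:linearity} to pull out a further factor $u$ from the second slot and split the difference, Lemma \ref{lem:c_Q} to evaluate $F\bigl(-\nabla\phi,(-\nabla\phi)\otimes(-\nabla\phi)\bigr) = -\cQ|\nabla\phi|^{\alpha+2}$, and \eqref{ass:scaling} once more to rewrite $F(-\nabla\phi,\D^2\phi) = F(\nabla\phi,\D^2\phi)$. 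The net result, evaluated at $x_0$, is $F(\nabla\tilde\phi,\D^2\tilde\phi) = u^{\alpha+1}\bigl[-\cQ|\nabla\phi|^{\alpha+2} - F(\nabla\phi,\D^2\phi)\bigr]$. Substituting into the subsolution condition $F_*(\nabla\tilde\phi,\D^2\tilde\phi) \leq \lambda u^{\alpha+1}$ and dividing by $u^{\alpha+1} > 0$ yields exactly the supersolution condition $F^*(\nabla\phi,\D^2\phi) \geq -\cQ|\nabla\phi|^{\alpha+2} - \lambda$ for $w$. The reverse implication is the same algebraic manipulation read backwards, starting from the test function $-\log\phi$.

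The only real subtlety is the degenerate case $\nabla\phi(x_0) = 0$, where the envelopes $F_*$ and $F^*$ are genuinely needed and \eqref{ass:linearity} must be carried through a limit. Here $\nabla\tilde\phi(x_0) = 0$ and $\D^2\tilde\phi(x_0) = -u(x_0)\D^2\phi(x_0)$; applying \eqref{ass:linearity} at each $z \neq 0$ gives $F(z,-uX) = -u\,F(z,X)$, and since $u > 0$ the infimum in the definition of $F_*$ becomes a supremum, producing the identity $F_*(0,-uX) = -u\,F^*(0,X)$. If $\alpha > 0$, then \eqref{ass:boundedness} forces $F^*(0,X)=0$ and both the subsolution and supersolution inequalities reduce to trivialities (using $\lambda > 0$); if $\alpha = 0$, dividing the identity by $u$ matches the two conditions directly. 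This completes the correspondence in every case.
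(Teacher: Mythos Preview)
Your proof is correct and is precisely the computation the paper has in mind; the paper itself merely records ``Again, this proof only involves computations,'' and your write-up supplies exactly those computations using \eqref{ass:linearity}, \eqref{ass:scaling}, and Lemma~\ref{lem:c_Q}, together with a careful treatment of the degenerate case $\nabla\phi(x_0)=0$.
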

\begin{proof}
	Again, this proof only involves computations.
\end{proof}

Since $w := -\log u$ degenerates at the boundary of $\Omega$ to $+\infty$, its convex envelope cannot be spanned by boundary points. Therefore we may omit the analogon to Lemma \ref{lem:convex_decomp} and proceed with showing that the convex envelope $w_{**}$ is a supersolution.

\begin{lemma}	\label{lem:eigen_convex_also_solution}
	Let $u \in \USC{\overline{\Omega}}$ be a positive subsolution to \eqref{eq:eigen_main_equation_constant} with $u = 0$ on $\partial \Omega$. Then
	$w_{**}$ is a supersolution to \eqref{eq:eigen_main_transformed} with $w_{**} = +\infty$ on $\partial \Omega$.
\end{lemma}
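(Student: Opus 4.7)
The plan is to mirror the proof of Lemma \ref{lem:convex_also_solution} almost verbatim, exploiting two structural simplifications afforded by the logarithmic transformation.

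First I would dispose of the boundary statement. Since $u=0$ on $\partial\Omega$ and $u>0$ in $\Omega$, the function $w=-\log u$ equals $+\infty$ on $\partial\Omega$ and is finite in $\Omega$. For any boundary point $x$, the line-segment principle for convex open sets forces every proper decomposition $x=\sum\mu_i x_i$ with $x_i\in\overline{\Omega}$ and $\mu_i>0$ to use only boundary points, so $\sum\mu_i w(x_i)=+\infty$, and hence $w_{**}=+\infty$ on $\partial\Omega$. The same observation has a useful side-effect: every decomposition of an interior point $x$ that realises the finite value $w_{**}(x)$ automatically uses only interior $x_i$. Consequently, no analogue of Lemma \ref{lem:convex_decomp} -- and in particular no appeal to the Hopf-type Lemma \ref{lem:eigen_hopf} -- is required here.

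Next I would verify the supersolution inequality at $x\in\Omega$: for $(q,X)\in\overline{J}_{\overline{\Omega}}^{2,-}w_{**}(x)$, the goal is
\[
F^*(q,X)\geq -\cQ|q|^{\alpha+2}-\lambda.
\]
Following the template of Lemma \ref{lem:convex_also_solution}, I would first reduce to $X\geq 0$ via \cite[Lemma 3]{alvarez1997convex}, and dispose of the degenerate case $q=0,\alpha>0$ directly -- \eqref{ass:boundedness} gives $F^*(0,X)=0$ and $\lambda>0$ handles the right-hand side. In the remaining case, write $x=\sum\mu_i x_i$ with $w_{**}(x)=\sum\mu_i w(x_i)$, all $x_i\in\Omega$ by the first step, and invoke \cite[Proposition 1]{alvarez1997convex} to produce positive definite matrices $X_i$ (after the standard $X_i\mapsto X_i+\tfrac{1}{n}\Id$ regularisation) with $(q,X_i)\in\overline{J}_{\overline{\Omega}}^{2,-}w(x_i)$ and $X-\eps X^2\leq Y:=\bigl(\sum_i\mu_i X_i^{-1}\bigr)^{-1}$.

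The supersolution property of $w$ at each $x_i$ then yields
\[
F^*(q,X_i)\geq -\cQ|q|^{\alpha+2}-\lambda,
\]
and the right-hand side is now genuinely independent of $i$ -- this is the second simplification over the power-concavity case, where the analogous bound carried a factor $1/w(x_i)$. Since $X_i>0$ and $q\neq 0$ or $\alpha=0$, \eqref{ass:boundedness} guarantees $-F^*(q,X_i)>0$, so I can invert, average with the weights $\mu_i$, and apply Lemma \ref{lem:operator_concavity} to conclude
\[
\frac{1}{-F^*(q,Y)}\geq \sum_i \mu_i\frac{1}{-F^*(q,X_i)}\geq \frac{1}{\cQ|q|^{\alpha+2}+\lambda},
\]
which rearranges to $F^*(q,Y)\geq -\cQ|q|^{\alpha+2}-\lambda$. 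Degenerate ellipticity, $X-\eps X^2\leq Y$, and $\eps\downarrow 0$ complete the argument. I do not foresee a real obstacle; the only point demanding care is the sign check on $-F^*(q,X_i)$ before inversion, which is handled exactly as in the power-concavity case.
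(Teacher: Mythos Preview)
Your proposal is correct and follows essentially the same route as the paper: reduction to $X\geq 0$ via \cite[Lemma 3]{alvarez1997convex}, the trivial case $q=0,\ \alpha>0$, decomposition into interior points (trivially, since $w=+\infty$ on $\partial\Omega$), the matrices $X_i$ from \cite[Proposition 1]{alvarez1997convex}, the sign check on $-F^*(q,X_i)$, summation/inversion, Lemma \ref{lem:operator_concavity}, and $\eps\downarrow 0$. The only cosmetic difference is that the paper cites \cite[Lemma 1]{alvarez1997convex} for the boundary statement where you argue it directly.
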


\begin{proof}
	According to \cite[Lemma 1]{alvarez1997convex} we have $w_{**} = w = +\infty$ on $\partial \Omega$. So we only have to show that $w_{**}$ is indeed a supersolution to \eqref{eq:eigen_main_transformed}. This means showing that
	\[
		F^*(q, X) +\abs{\nabla w_{**}}^{\alpha+2} \cQ + \lambda
		\geq	0
	\]
	for all $x \in \Omega$ and $(q,X) \in J_{\overline{\Omega}}^{2,-}w_{**}(x)$.\\
	Using \cite[Lemma 3]{alvarez1997convex} we only have to consider the case of $X \geq 0$.
	\\
	
	If $q = 0$ and $\alpha > 0$ we have
	\[
		F^*(q,X) = 0
	\]
	by \eqref{ass:boundedness}. So the assertion is obvious since all the other terms are nonnegative. So let us assume $q \neq 0$ or $\alpha = 0$.
	\\
	
	Since $w_{**}$ cannot be spanned by any boundary points, we can decompose $x$ in a convex combination of interior points $x_1, \ldots, x_k \in \Omega$ such that
	\begin{align*}
		\sum_{i=1}^{k} \mu_i x_i = x
	&&\text{and}&&
		\sum_{i=1}^{k} \mu_i w(x_i) = w_{**}(x)
	\end{align*}
	for some $\mu_i, \ldots, \mu_k > 0$ with $\sum_{i=1}^{k} \mu_i = 1$.
	Then by \cite[Proposition 1]{alvarez1997convex} for every $\eps > 0$ small enough, there are positive semidefinite matrices $X_1,\ldots,X_k \in \Symp$ such that
	\[
		X - \eps X^2
		\leq \left( \sum_{i=1}^{k} \mu_i X_i^{-1}\right)
		=: Y.
	\]
	Again, we may assume that the matrices $X_1, \ldots, X_k$ are positive definite. Otherwise we consider matrices
	\[
		\tilde X_i := X_i + \tfrac{1}{n} \Id
	\]
	and afterwards take the limit $n \to \infty$ as in
	\cite[p. 273]{alvarez1997convex}.
	Using that by assumption $w$ is a supersolution to \eqref{eq:eigen_main_transformed}, we find
	\[
		F^*(q, X_i) \geq
		-\abs{q}^{\alpha+2} \cQ - \lambda
	\]
	for $i = 1,\ldots, k$.	
	Since $X_i$ is positive definite and we are considering the case of $q \neq 0$ or $\alpha =0$, we have $F^*(q,X_i) < 0$ according to \eqref{ass:boundedness} which allows us to rearrange the preceding inequality to
	\[
		1 \leq - \frac{1}{F^*(q, X_i)} \left(\cQ \abs{q}^{\alpha+2} + \lambda \right)
	\]
	for $i = 1,\ldots, k$. By summation and rearranging terms again, we obtain
	\[
		\cQ \abs{q}^{\alpha+2} + \lambda \geq \left( \sum_{i=1}^{k} \mu_i \frac{1}{- F^*(q,X_i)} \right)^{-1}.
	\]
	Using that $F^*$ is degenerate elliptic and plugging in $X - \eps X^2 \leq Y$ we find
	\begin{align*}
	&F^*(q, X - \eps X^2) + \cQ \abs{q}^{\alpha+2} + \lambda	\\
	&\qquad\geq F^*(q, Y) + \cQ \abs{q}^{\alpha+2} + \lambda	\\
	&\qquad\geq F^*(q, Y) + \left( \sum_{i=1}^{k} \mu_i \frac{1}{- F^*(q,X_i)} \right)^{-1}.
	\end{align*}
	By Lemma \ref{lem:operator_concavity} we have
	\begin{align*}
	\sum_{i=1}^{k} \mu_i \frac{1}{-F^*(q,X_i)}
	&= \sum_{i=1}^{k} \mu_i \frac{1}{-F^*(q,(X_i^{-1})^{-1})}	\\
	&\leq \frac{1}{-F^*(q, \sum_{i=1}^k \mu_i X_i^{-1})}	\\
	&= \frac{1}{-F^*(q, Y)}
	\end{align*}
	so we may combine these two estimates to obtain
	\[
	F^*(q, X - \eps X^2) \geq 0.
	\]
	Sending $\eps \downarrow 0$ concludes the assertion.
\end{proof}

Just like Theorem \ref{th:concavity} we conclude our second main result.

\begin{theorem}
	Let $u \in C(\Omega)$ be a positive viscosity solution to \eqref{eq:eigen_main_equation_constant} with $u = 0$ on $\partial \Omega$ in a convex domain $\Omega$ that satisfies the interior sphere condition. Furthermore we assume that \eqref{ass:linearity}, \eqref{ass:boundedness}, \eqref{ass:scaling}, \eqref{ass:rotinvariance}, and \eqref{ass:operator_concavity} hold.
	Then $\log u$ is concave.
\end{theorem}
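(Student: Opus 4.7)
The plan is to mirror the proof of Theorem \ref{th:concavity}, with the logarithmic transformation $w := -\log u$ replacing the power transformation. Since $u$ is a positive viscosity solution, hence in particular a positive subsolution, Lemma \ref{lem:eigen_transformed_equation} shows that $w$ is a supersolution to \eqref{eq:eigen_main_transformed}. Because $u > 0$ in $\Omega$ and $u = 0$ on $\partial \Omega$, we have $w \in \LSC{\overline{\Omega}}$ with $w \equiv +\infty$ on $\partial \Omega$.

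By Lemma \ref{lem:eigen_convex_also_solution}, the convex envelope $w_{**}$ is itself a supersolution to \eqref{eq:eigen_main_transformed} with $w_{**} \equiv +\infty$ on $\partial \Omega$. Reversing the transformation via the other direction of Lemma \ref{lem:eigen_transformed_equation}, the function $v := \exp(-w_{**})$ is a positive subsolution to \eqref{eq:eigen_main_equation_constant} that vanishes on $\partial \Omega$. By the definition of the convex envelope we have $w_{**} \leq w$ on $\overline{\Omega}$, so $v \geq u$ in $\Omega$; the remaining task is to prove the reverse inequality $v \leq u$, which will force $v = u$, hence $w_{**} = w$, and therefore convexity of $w = -\log u$.

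The main obstacle is precisely this last comparison step. Because \eqref{eq:eigen_main_equation_constant} is positively $(\alpha+1)$-homogeneous in $u$, the direct sub/supersolution comparison used in Theorem \ref{th:concavity} does not apply. I would instead argue via a scaling argument combined with the comparison principle \cite[Theorem 1.3, Theorem 2.4]{lu2008uniqueness}: set $t^* := \inf \{t > 0 \mid t u \geq v \text{ in } \Omega\}$. The Hopf lemma \ref{lem:eigen_hopf} together with the radial lower bound on $v$ supplied by Lemma \ref{lem:eigen_comparison_with_radial} keep $t^*$ finite and push any touching point of $t^* u$ and $v$ into the interior of $\Omega$. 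Homogeneity makes $t^* u$ a viscosity solution of the same equation as $v$, so a linearization / strong minimum principle argument applied to the nonnegative function $t^* u - v$ at the interior touching point forces $t^* u \equiv v$; matching the Dirichlet data and the boundary decay rates then yields $t^* = 1$, giving $v = u$ and concluding the proof.
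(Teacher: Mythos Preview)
Your proposal tracks the paper exactly through the transformation $w=-\log u$, Lemma~\ref{lem:eigen_transformed_equation}, Lemma~\ref{lem:eigen_convex_also_solution}, and the reverse transformation $v=\exp(-w_{**})$. The paper's own proof of this theorem is literally the single sentence ``Just like Theorem~\ref{th:concavity} we conclude our second main result,'' so up to the final comparison step the two arguments are identical; the paper simply reinvokes the Lu--Wang comparison \cite[Theorem~1.3, Theorem~2.4]{lu2008uniqueness} without further comment.

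You are right to flag that step: the right-hand side $g(x,z)=\lambda\abs{z}^{\alpha}z$ of \eqref{eq:eigen_main_equation_constant} is \emph{not} proper in the sense required in Section~1.2, and the $(\alpha+1)$-homogeneity genuinely obstructs a naive sub/super comparison with zero boundary data. The paper does not discuss this, so your worry is legitimate, and your instinct to look for a different route is sound.

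That said, your proposed fix has its own gaps. The assertion that ``a linearization / strong minimum principle argument \dots\ forces $t^{*}u\equiv v$'' is doing all the work, and no such strong minimum principle has been established here for the degenerate nonlinear operator $F$; proving one is at least as hard as the comparison you are trying to avoid. Furthermore, your last sentence (``matching the Dirichlet data and the boundary decay rates then yields $t^{*}=1$'') does not stand on its own: $t^{*}u$ and $v$ both vanish on $\partial\Omega$ for \emph{every} $t^{*}>0$, so boundary data cannot select $t^{*}$. What would actually force $t^{*}=1$, once $v=t^{*}u$ is known, is that the convex envelope must coincide with $w$ at the interior points $x_{i}$ appearing in the decomposition of Lemma~\ref{lem:eigen_convex_also_solution}, which rules out $w_{**}=w-\log t^{*}$ with $t^{*}>1$. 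A route that stays closer to the paper's framework is to attempt the comparison at the level of the \emph{transformed} equation \eqref{eq:eigen_main_transformed}, whose right-hand side does not depend on $w$ and is therefore trivially proper; the cost is handling the superlinear gradient term $c_{Q}\abs{\nabla w}^{\alpha+2}$ and the $+\infty$ boundary values.
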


\begin{acknowledgement}
	\vfill
	I thank B. Kawohl for suggesting the subject of this paper and for helpful discussion.
\end{acknowledgement}

	\bibliography{power_and_log_concavity}
	\bibliographystyle{alpha}
\end{document}